\newtheorem{theorem}{Theorem}[section]
\newtheorem{proposition}[theorem]{Proposition}
\newtheorem{lemma}[theorem]{Lemma}
\newtheorem{corollary}[theorem]{Corollary}
\theoremstyle{definition}
\newtheorem{definition}[theorem]{Definition}
\newtheorem{remark}[theorem]{Remark}
\newtheorem*{ackn}{Acknowledgements}
\numberwithin{equation}{section}
\newcommand{\C}{\mbox{$\mathbb{C}$}}
\newcommand{\Capo}{\mbox{Cap}_{m}}
\newcommand{\Ker}{\mbox{$\mathcal{N}_{m}$}}
\newcommand{\E}{\mbox{$\mathcal{E}_{m}$}}
\newcommand{\Eo}{\mbox{$\mathcal{E}^{0}_{m}$}}
\newcommand{\F}{\mbox{$\mathcal{F}_{m}$}}
\newcommand{\HH}{\operatorname{H}_{m}}
\newcommand{\MSH}[1]{\mbox{$\mathcal{SH}_{m}(#1)$}}
\newcommand{\PP}{\mbox{\textbf{P}}}
\begin{document}

\title[Kiselman Minimum Principle and Rooftop Envelopes]{Kiselman Minimum Principle and Rooftop Envelopes in Complex Hessian Equations}

\author{Per \AA hag}\address{Department of Mathematics and Mathematical Statistics\\ Ume\aa \ University\\SE-901~87 Ume\aa \\ Sweden}\email{per.ahag@umu.se}

\author{Rafa\l\ Czy{\.z}}\address{Faculty of Mathematics and Computer Science, Jagiellonian University, \L ojasiewicza~6, 30-348 Krak\'ow, Poland}
\email{rafal.czyz@im.uj.edu.pl}

\author{Chinh H. Lu}\address{Laboratoire Angevin de Recherche en Math\'{e}matiques (LAREMA), Universit\'{e} d'Angers (UA), 2 Boulevard de Lavoisier, 49000 Angers, France}
\email{hoangchinh.lu@univ-angers.fr}

\author{Alexander Rashkovskii}\address{Department of Mathematics and Physics, University of Stavanger, 4036 Stavanger, Norway}\email{alexander.rashkovskii@uis.no}

\keywords{Complex Hessian equation,  $m$-subharmonic function, Geodesic, Rooftop envelope, Kiselman minimum principle}
\subjclass[2020]{Primary. 32U05, 32U35, 32W20; Secondary.  32F17, 53C22}
\date{\today}

\begin{abstract}
We initiate the study of $m$-subharmonic functions with respect to a semipositive $(1,1)$-form in Euclidean domains, providing a significant element in understanding geodesics within the context of complex Hessian equations. Based on the foundational Perron envelope construction, we prove a decomposition of $m$-subharmonic solutions, and a general comparison principle that effectively manages singular Hessian measures. Additionally, we establish a rooftop equality and an analogue of the Kiselman minimum principle, which are crucial ingredients in establishing a criterion for geodesic connectivity among $m$-subharmonic functions, expressed in terms of their asymptotic envelopes.
\end{abstract}

\maketitle

\tableofcontents

\section{Introduction}

Since the German mathematician Oskar Perron introduced his celebrated envelope construction in 1923 to solve a boundary value problem for the Laplace equation~\cite{Perron1923}, the method has not only stood the test of time but has also gained prominence in contemporary mathematical analysis. In this paper, given a non-negative regular Borel measure $\mu$, we consider $m$-subharmonic solutions to the complex Hessian equation:
\[
H_m(u)=\mu,
\]
where $1\leq m\leq n$. Here, the Hessian operator is defined by $H_m(u)=(dd^c u)^m \wedge (dd^c |z|^2)^{n-m}$ for smooth $u$, and extends for bounded $m$-subharmonic functions by Bedford-Taylor theory \cite{BT76}, as presented by B{\l}ocki.  These equations, which interpolate between the Laplace and Monge-Ampère equations, have been intensively studied over the past twenty years by Li \cite{Li04}, Błocki \cite{Blo05}, and many others. We refer the reader to Section~\ref{Sec:Preliminaries} for the necessary background.

This paper aims to introduce, develop, and employ \emph{rooftop envelopes} for $m$-subharmonic functions, thus carrying forward the enduring legacy of Perron's original envelope construction. Our inspiration comes from the pluricomplex counterpart, the plurisubharmonic rooftop envelopes, recently utilized in  multiple papers, including \cite{DDL5,GL21a, GL22, GL23Crelle,LN22,Sal23}.

In classical potential theory, the decomposition of subharmonic functions into regular and singular parts corresponds uniquely to the decomposition of their associated Laplace measures, $\Delta u = \mu_r + \mu_s$. Here, $\mu_s$ is carried by the polar set $\{u = -\infty\}$, and $\mu_r$ vanishes on these sets. Such a decomposition highlights the linearity of the Laplace operator, allowing for a straightforward decomposition of the function $u = u_r + u_s$, where $\Delta u_r = \mu_r$ 
and $\Delta u_s = \mu_s$.

However, the complex Hessian operator's non-linearity precludes a direct analogous decomposition for $m$-subharmonic functions. To address this, we develop an adapted approach, presented in the following result:

\bigskip

\noindent {\bf Theorem~A}. \emph{
For any $u \in \Ker$, there exist unique functions $u_r, u_s \in \Ker$ satisfying the following conditions:
    \begin{enumerate}
        \item $u \leq u_r$, $u \leq u_s$;
        \item $H_m(u_r) = \mu_r(u) = {\bf 1}_{\{u > -\infty\}} H_m(u)$;
        \item $H_m(u_s) = \mu_s(u) = {\bf 1}_{\{u = -\infty\}} H_m(u)$.
    \end{enumerate}
Moreover, $u_r + u_s \leq u$.}

\bigskip

The proof of Theorem~A relies on the rooftop techniques developed in Section~\ref{Rooftop envelope}. The most valuable and surprising implication of these techniques is the general comparison principle articulated in Theorem~\ref{thm: uniqueness in N}, which allows us to handle singular Hessian measures satisfactorily for the first time.
Building on this comparison principle, we are able to adapt our argument in the Monge-Amp\`ere case \cite{ACLR24} to obtain the following result.

\bigskip

\noindent {\bf Theorem~B.}\emph{
Assume $H_1, H_2 \in \E$, and
\[
\PP[H_1](H_2) = \PP(g_{H_1},H_2).
\]
Then $\PP[u](v) = \PP(g_u, v)$ for all $u \in \Ker(H_1)$ and $v \in \Ker(H_2)$.
}

\bigskip

We direct the reader to Section~\ref{sect: rofftop equality} for the definition of the rooftop envelope. The theorem under discussion states that the rooftop equality holds for a pair of $m$-subharmonic functions if it is valid for their boundary functions. Specifically, this condition is met for functions in $\Ker$, as the boundary function vanishes.

Rooftop envelopes have recently emerged as a fundamental tool in constructing plurisubharmonic geodesics within geometry. These geodesics are defined as the upper envelopes of sub-geodesics, an idea that builds upon Mabuchi’s seminal work \cite{Mab87} and the findings of Semmes \cite{Sem92} and Donaldson \cite{Don99}. This approach has been further developed and adapted to local contexts by Berman and Berndtsson \cite{BB22arxiv,BB22}, among others. A comprehensive overview of these developments can be found in \cite{Ras23} (see also~\cite{Ras22}).

However, the construction of $m$-subharmonic geodesics when $1 \leq m < n$ presents significant challenges, a longstanding problem that has notably hindered progress in this field, as discussed in \cite{AC23G}. The geodesic equation for plurisubharmonic functions reveal that for $m$-subharmonic functions, one should replace the standard K\"ahler form by its pull back to the product of the domain with the annulus in $\mathbb{C}$. This observation leads us to introduce the concept of $m$-subharmonic functions with respect to semipositive $(1,1)$-forms. Using this concept we have successfully overcome the formidable difficulties encountered in \cite{AC23G}, thereby allowing us to extend our previous results \cite{ACLR24} to the Hessian setting.

\bigskip

\noindent {\bf Theorem~C.}\emph{
 Assume $H_0,H_1\in \mathcal{E}_m$ are connectable by an $m$-subharmonic geodesic segment. Let $u_0\in \Ker(H_0)$, $u_1\in \Ker(H_1)$. Then $u_0$ and $u_1$ are connectable by an $m$-subharmonic geodesic if and only if
 \[
 g_{u_0}\geq u_1\quad \text{and}\quad g_{u_1}\geq u_0.
\]
  In particular, if $g_{H_0}=g_{H_1}$, then the above condition is equivalent to $g_{u_0}=g_{u_1}$.
}

\bigskip

An essential ingredient in the proof of Theorem~C is the following minimum principle for $m$-subharmonic functions, which extends the renowned result by Kiselman in the case when $n = m$ \cite{Kis78}.

\medskip

\noindent {\bf Theorem~D.} \emph{Let $(u_t)_{t\in (0,1)}$ be an $m$-subharmonic sub-geodesic in $\Omega$. Then the function $\inf_{t\in (0,1)} u_t$ is $m$-subharmonic in $\Omega$.}

\bigskip

The infimum of a family of $m$-subharmonic functions is often not $m$-subharmonic (not even upper semicontinuous), as it already fails for plurisubharmonic functions. A crucial property of $m$-subharmonic sub-geodesics is that it is $S^1$-invariant in the annulus variable.

The paper is organized as follows. Section~\ref{Sec:Preliminaries} provides the necessary definitions related to Hessian operators and the Cegrell classes. In Section~\ref{Sec: decomposition of measures}, we discuss key aspects of the decomposition of complex Hessian measures, which are critical for the discussions in Sections~\ref{Rooftop envelope} and~\ref{sect: uniqueness}. The concept of the rooftop envelope is introduced in Section~\ref{Rooftop envelope}. Subsequently, in Section~\ref{sect: uniqueness}, we employ our rooftop techniques to establish the comparison principle (Theorem~\ref{thm: uniqueness in N}) that effectively manages singular Hessian measures. This forms the basis for presenting the proofs of Theorem~A and Theorem~B. We conclude this paper by introducing the concept of $m$-subharmonic functions with respect to semipositive $(1,1)$-forms and utilize this to establish the geodesic connectivity criterion, proving Theorem~D and Theorem~C in Section~\ref{sect: geodesic connectivity}.

\begin{ackn}
The third named author acknowledges partial support from the PARAPLUI ANR-20-CE40-0019 project and the Centre Henri Lebesgue ANR-11-LABX-0020-01. The fourth named author is grateful to Jagiellonian University and Université d'Angers for their support. We are deeply grateful to S{\l}awomir Dinew for his invaluable discussions and insightful comments on the first version of this paper, particularly regarding Theorem~D.
\end{ackn}

\section{Hessian operators and definitions of the Cegrell classes}\label{Sec:Preliminaries}

This section introduces the necessary definitions and summarizes some basic facts. For additional details, we refer the reader to~\cite{AC22Q,AS12,AS13,Ngu13}.

We begin by defining $m$-subharmonic functions and the complex Hessian operator. Consider a bounded domain $\Omega \subset \C^n$, where $n \geq 2$, and let $1 \leq m \leq n$. Define $\mathbb{C}_{(1,1)}$ as the set of $(1,1)$-forms with constant coefficients. We then define the set
\[
\Gamma_m = \left\{ \alpha \in \mathbb{C}_{(1,1)} : \alpha \wedge (dd^c|z|^2)^{n-1} \geq 0, \dots, \alpha^m \wedge (dd^c|z|^2)^{n-m} \geq 0 \right\}.
\]

The real counterpart of $m$-subharmonic functions was first introduced by Caffarelli, Nirenberg, and Spruck~\cite{CNS85}. The origin of these functions in the complex setting, which is our focus here, was established by Vinacua~\cite{Vin86, Vin88}. Later, B\l ocki~\cite{Blo05} extended this concept to unbounded functions, as seen in Definition~\ref{m-sh}, and he introduced pluripotential methods.

\begin{definition}\label{m-sh}
Let $n \geq 2$ and $1 \leq m \leq n$. Assume that $\Omega \subset \C^n$ is a bounded domain, and let $u$ be a subharmonic function defined on $\Omega$. We say that $u$ is \emph{$m$-subharmonic} if it satisfies the following inequality
\[
dd^c u \wedge \alpha_1 \wedge \dots \wedge \alpha_{m-1} \wedge (dd^c |z|^2)^{n-m} \geq 0,
\]
in the sense of currents, for all $\alpha_1, \ldots, \alpha_{m-1} \in \Gamma_m$. The set of all $m$-subharmonic functions defined on $\Omega$ is denoted by $\mathcal{SH}_m(\Omega)$.
\end{definition}

If $u$ is bounded and $m$-subharmonic, then the current $dd^c u \wedge (dd^c |z|^2)^{n-m}$ is positive, and therefore, it has measure coefficients. Following Bedford and Taylor \cite{BT76}, one can inductively define
\[
dd^c u_1 \wedge \ldots \wedge dd^c u_m \wedge (dd^c |z|^2)^{n-m},
\]
as a positive Radon measure in $\Omega$.

The properties of general $m$-subharmonic functions significantly diverge from those of $n$-subharmonic functions. A primary concern is their integrability; whereas all plurisubharmonic functions are locally $L^p$ integrable for any $p > 0$, $m$-subharmonic functions do not necessarily share this property. B\l ocki has conjectured that $m$-subharmonic functions should be locally $L^p$ integrable for $p < \frac{nm}{n-m}$, a conjecture that has received partial confirmation in~\cite{AC20,DK14}.

From Definition~\ref{m-sh} it follows
\[
\mathcal{PSH}=\mathcal{SH}_n \subset \cdots \subset \mathcal{SH}_1 =\mathcal{SH}\, .
\]

\begin{definition}\label{prel_hcx}
Let $n \geq 2$, and $1 \leq m \leq n$. A bounded domain $\Omega \subset \C^n$ is said to be \emph{$m$-hyperconvex} if it admits a non-negative, continuous, and $m$-subharmonic exhaustion function, i.e., there exists an $m$-subharmonic function $\varphi : \Omega \to (-\infty, 0)$ such that the closure of the set $\{z \in \Omega : \varphi(z) < c\}$ is compact in $\Omega$ for every $c \in (-\infty, 0)$.
\end{definition}

For example Hartogs' triangle is $1$-hyperconvex, but not $2$-hyperconvex. For further information on  $m$-hyperconvex domains see~\cite{ACH18}. Next, we shall recall the function classes that are of our interest.

A function $\varphi$, which is $m$-subharmonic on an $m$-hyperconvex domain $\Omega$, belongs to the class $\mathcal{E}^0_{m}(\Omega)$ if $\varphi$ is bounded, satisfies
\[
\lim_{z \to \xi} \varphi(z) = 0 \quad \text{for every } \xi \in \partial\Omega,
\]
and fulfills
\[
\int_{\Omega} \operatorname{H}_m(\varphi) < +\infty.
\]

\begin{definition}
Let $n \geq 2$, and $1 \leq m \leq n$. Assume that $\Omega$ is a bounded $m$-hyperconvex domain in $\mathbb{C}^n$.
We say that $u \in \mathcal{F}_m(\Omega)$ if $u$ is an $m$-subharmonic function defined on $\Omega$ and there exists a decreasing sequence $\{\varphi_j\}$, where each $\varphi_j \in \mathcal{E}^0_m(\Omega)$, such that $\varphi_j$ converges pointwise to $u$ on $\Omega$ as $j \to +\infty$, and $\sup_j \int_{\Omega} \operatorname{H}_m(\varphi_j) < +\infty$. Furthermore, if for every $z \in \Omega$ there exists a neighborhood $V \subset \Omega$ of $z$ and $u_V \in \mathcal{F}_m(\Omega)$ such that $u_V = u$ on $V$, then we say that $u \in \mathcal{E}_m$.
\end{definition}

In~\cite{Lu12,Lu15}, it was proved that for $u\in \mathcal{E}$, the complex Hessian operator, $\operatorname{H}_m(u)$, is well-defined as
\[
\operatorname{H}_m(u) = (dd^c u)^m \wedge (dd^c |z|^2)^{n-m},
\]
where $d = \partial + \bar{\partial}$, and $d^c = \sqrt{-1} (\bar{\partial} - \partial)$.

Let $[\Omega_j]$ denote a \emph{fundamental sequence} in the sense that it is an increasing sequence of $m$-hyperconvex subsets of the bounded $m$-hyperconvex domain $\Omega$, such that $\Omega_j \Subset \Omega_{j+1}$ for every $j \in \mathbb{N}$, and $\bigcup_{j=1}^{\infty} \Omega_j = \Omega$. Then, if $u \in \mathcal{E}$, and $[\Omega_j]$ is a fundamental sequence, we define
\[
u^j = \sup \left\{ \varphi \in \mathcal{SH}_{m}(\Omega) : \varphi \leq u \text{ on } \bar{\Omega}_j \right\},
\]
and
\[
\tilde{u} = \left(\lim_{j \to +\infty} u^j\right)^*,
\]
where $(\,\cdot\,)^*$ denotes the upper-semicontinuous regularization.
The function $\tilde{u}$ is the \emph{smallest maximal $m$-subharmonic majorant} of $u$. Set
\[
\Ker = \left\{u \in \mathcal{E} : \tilde{u} = 0 \right\}.
\]
For Cegrell's fundamental work on these classes for $m=n$, see~\cite{Ceg98,Ceg04,Ceg08}.

\medskip

Next, we introduce the Cegrell classes with generalized boundary values.

\begin{definition}\label{Prel: CC}
Let $\mathcal{K} \in \{\Eo, \F, \Ker, \E\}$. A plurisubharmonic function $u$ on $\Omega$ belongs to the class $\mathcal{K}(\Omega, H) (= \mathcal{K}(H))$, $H \in \mathcal {SH}^-(\Omega)$, if there exists a function $\varphi \in \mathcal{K}$ such that
\[
H \geq u \geq \varphi + H.
\]
\end{definition}
Further information about $\Ker(H)$ can be found in~\cite{EG21, NVT19}. For any subset $\mathcal{K} \subseteq \mathcal{E}(\Omega)$, we introduce the notation
\[
\mathcal{K}^a = \{\varphi \in \mathcal{K} : (dd^c \varphi)^n \text{ vanishes on all $m$-polar sets in } \Omega\}.
\]
On several occasions, we shall use the following maximum principle.

\begin{lemma}\label{HVP17}
\begin{enumerate}
\item Let $u_1,\dots,u_{m-1}\in \E$ and $v\in \mathcal {SH}^-$. Then
\begin{multline*}
dd^c\max(u,v)\wedge dd^cu_1\wedge\cdots\wedge dd^cu_{m-1}\wedge (dd^c|z|^2)^{n-m}|_{\{u>v\}}\\
=dd^cu\wedge dd^cu_1\wedge\cdots\wedge dd^cu_{m-1}\wedge (dd^c|z|^2)^{n-m}|_{\{u>v\}}.
\end{multline*}
In particular
\[
{\bf 1}_{\{u>v\}}\HH(\max(u,v))={\bf 1}_{\{u>v\}}\HH(u).
\]
\item Let $u,v\in \E$ be such that $\HH(u)(\{u=v=-\infty\})=0$, then
\[
\HH(\max(u,v))\geq{\bf 1}_{\{u\geq v\}}\HH(u)+{\bf 1}_{\{u<v\}}\HH(v).
\]
\end{enumerate}
\end{lemma}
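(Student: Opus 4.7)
The proof splits naturally into two parts, one for each clause.

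For Part (1), the identity is a locality statement on the open set $\{u>v\}$, where $\max(u,v)$ and $u$ coincide as functions. For bounded $m$-subharmonic $u,v,u_1,\ldots,u_{m-1}$ the claim follows directly from B\l ocki's extension of Bedford-Taylor theory: the mixed wedge product $dd^c\,\cdot\,\wedge dd^cu_1\wedge\cdots\wedge dd^cu_{m-1}\wedge(dd^c|z|^2)^{n-m}$ depends only on the local values of its first argument, so the two currents agree on the open set where those arguments agree. For general $u,u_i\in\E$ I would apply the bounded identity to the canonical truncations $u^k=\max(u,-k)$, $v^k=\max(v,-k)$, $u_i^k=\max(u_i,-k)$ on $\{u^k>v^k\}\supset \{u>v\}\cap\{u>-k\}$, and then let $k\to\infty$ using the continuity of the Hessian product along decreasing sequences in $\E$. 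The ``in particular'' statement follows by the specialization $u_1=\cdots=u_{m-1}=u$.

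For Part (2), I would apply Part (1) to the shifted pair $(u,v-\epsilon)$ with $\epsilon>0$, noting that $v-\epsilon$ is still a negative $m$-subharmonic function and $\HH(v-\epsilon)=\HH(v)$. Part (1) together with its symmetric version (swap the roles of $u$ and $v-\epsilon$) yields
\[
\HH(\max(u,v-\epsilon))\big|_{\{u>v-\epsilon\}}=\HH(u)\big|_{\{u>v-\epsilon\}},\quad \HH(\max(u,v-\epsilon))\big|_{\{u<v-\epsilon\}}=\HH(v)\big|_{\{u<v-\epsilon\}},
\]
and therefore
\[
\HH(\max(u,v-\epsilon))\;\geq\;{\bf 1}_{\{u>v-\epsilon\}}\HH(u)+{\bf 1}_{\{u<v-\epsilon\}}\HH(v).
\]

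The final step is to let $\epsilon\downarrow 0$. The left side converges weakly to $\HH(\max(u,v))$ by the continuity of $\HH$ along decreasing sequences in $\E$. Pointwise, ${\bf 1}_{\{u>v-\epsilon\}}$ decreases to ${\bf 1}_{\{u\geq v\}\setminus\{u=v=-\infty\}}$, while ${\bf 1}_{\{u<v-\epsilon\}}$ increases to ${\bf 1}_{\{u<v\}}$. Integrating both sides against any non-negative continuous test function with compact support and using the monotone convergence theorem, the right side tends to
\[
{\bf 1}_{\{u\geq v\}\setminus\{u=v=-\infty\}}\HH(u)+{\bf 1}_{\{u<v\}}\HH(v),
\]
and the hypothesis $\HH(u)(\{u=v=-\infty\})=0$ then lets us replace the first indicator by ${\bf 1}_{\{u\geq v\}}$. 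The main subtlety, and precisely where the hypothesis is essential, is upgrading the ``open'' indicator ${\bf 1}_{\{u>v-\epsilon\}}$ obtained from the locality argument to the ``closed'' indicator ${\bf 1}_{\{u\geq v\}}$ in the limit; without this hypothesis the method only yields the weaker inequality with ${\bf 1}_{\{u>v\}}$ in place of ${\bf 1}_{\{u\geq v\}}$, and the stronger form stated in the lemma can indeed fail on the polar set $\{u=v=-\infty\}$.
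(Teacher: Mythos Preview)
Your argument for Part (2) is correct and follows the same strategy as the paper: shift $v$ to $v-\epsilon$, apply Part (1) on the two open sets, and pass to the limit. One small slip: as $\epsilon\downarrow 0$ the sequence $\max(u,v-\epsilon)$ is \emph{increasing} to $\max(u,v)$, not decreasing as you wrote; the weak convergence of $\HH(\max(u,v-\epsilon))$ still holds, but by continuity along increasing sequences.

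The paper adds one refinement you might find instructive. Rather than letting $\epsilon\downarrow 0$ through all values and handling the decreasing indicator ${\bf 1}_{\{u>v-\epsilon\}}$ in the limit, the paper observes that the level sets $K_t=\{u+t=v\}\setminus\{u=v=-\infty\}$ are pairwise disjoint for distinct $t>0$, so one can choose a sequence $t_j\to 0$ with $\HH(u)(K_{t_j})=0$; combined with the hypothesis this gives $\HH(u)(\{u=v-t_j\})=0$, so that ${\bf 1}_{\{u>v-t_j\}}\HH(u)={\bf 1}_{\{u\geq v-t_j\}}\HH(u)\geq {\bf 1}_{\{u\geq v\}}\HH(u)$ already \emph{before} taking the limit. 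This leaves only increasing terms on the right-hand side and makes the final passage a pure monotone-convergence step. Your direct limit works too (the first term is handled by dominated convergence since $\HH(u)$ is locally finite for $u\in\E$), but the paper's trick is a clean way to avoid mixing monotone directions.
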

\begin{proof}
Part (1) was proved in~\cite{HVP17}.

To prove (2), let us define the following sets for $t>0$, $K_t=\{u+t=v\}\setminus \{u=v=-\infty\}$. Note that $K_{t_1}\neq K_{t_2}$, for $t_1\neq t_2$, so there exists a sequence $t_j\to 0$, such that $\HH(u)(K_{t_j})=0$, and therefore $\HH(u)(\{u+t_j=v\})=0$. Now we can apply point (1) to get
\begin{multline}\label{emp}
    \HH(\max(u,v-t_j)) \\
    \geq {\bf 1}_{\{u> v-t_j\}}\HH(\max(u,v-t_j))+{\bf 1}_{\{u< v-t_j\}}\HH(\max(u,v-t_j))\\
    ={\bf 1}_{\{u\geq v-t_j\}}\HH(u)+{\bf 1}_{\{u< v-t_j\}}\HH(v) \\ \geq {\bf 1}_{\{u\geq v\}}\HH(u)+{\bf 1}_{\{u< v-t_j\}}\HH(v).
\end{multline}
Observe that $\max(u,v-t_j)$ and ${\bf 1}_{\{u< v-t_j\}}$ are increasing sequences, therefore passing with $j\to +\infty$ in (\ref{emp}) we prove (2).

\end{proof}

\section{Cegrell decomposition of measures}\label{Sec: decomposition of measures}

Here, we discuss some aspects of the decomposition of complex Hessian measures.

First, recall the Cegrell-Lebesgue decomposition theorem~\cite{Lu15}: A non-negative Radon measure $\mu$ can be decomposed into a regular (non $m$-polar) and a singular ($m$-polar) part,
\[
\mu = \mu_r + \mu_s,
\]
such that
\[
\mu_r = f \operatorname{H}_m(\varphi),
\]
where $\varphi \in \mathcal{E}_m^0$, $f \geq 0$, and $f \in L^1_{\text{loc}}(\operatorname{H}_m(\varphi))$. The measure $\mu_s$ is carried by an $m$-polar subset of $\Omega$. Furthermore, if $\mu$ is a Hessian measure of some $u \in \mathcal{E}_m$, i.e., $\mu = \operatorname{H}_m(u)$, then $\mu_s$ is carried by $\{z \in \Omega : u(z) = -\infty\}$ and
\[
\mu_s(u) = {\mathbf{1}}_{\{u = -\infty\}}(dd^c u)^n \quad \text{and} \quad \mu_r(u) = {\mathbf{1}}_{\{u > -\infty\}}(dd^c u)^n.
\]
We will now demonstrate that the regular part can be described alternatively, using the standard approximation $u_j = \max(u, -j)$ of the $m$-subharmonic function $u$.

\begin{lemma}\label{reg}
Let $u\in \mathcal{SH}_m$, and let $u_j=\max(u,-j)$, $j>0$. Then
\begin{enumerate}
    \item the sequence of measures ${\bf 1}_{\{u>-j\}}\HH(u_j)$ is increasing
    and hence we can define
\[
\mu_r(u)  = \lim_{j\to +\infty} {\bf 1}_{\{ u>-j\}}\HH(u_j);
\]
\item  and for any $k>0$ holds
    \begin{equation}
	\label{eq: NP plurifine property}
	{\bf 1}_{\{u>-k\}} \HH(u_k)  = {\bf 1}_{\{u>-k\}}  \mu_r(u).
\end{equation}

    \item If in addition $u\in \E$, then
\[
\mu_r(u)={\bf 1}_{\{u>-\infty\}}\HH(u).
\]

\end{enumerate}
\end{lemma}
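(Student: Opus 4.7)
The plan is to apply the plurifine locality encoded in Lemma~\ref{HVP17}(1) to the bounded truncations $u_j=\max(u,-j)$, combined with elementary set-theoretic identities between $u$, $u_j$ and $u_k$ on the level sets $\{u>-j\}$.

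For part~(1), fix $0<j\leq k$. Since $u_k=\max(u,-k)\geq -k \geq -j$ except on $\{u\leq -j\}$, one checks that $\{u_k>-j\}=\{u>-j\}$ and $\max(u_k,-j)=\max(u,-j)=u_j$. Applying Lemma~\ref{HVP17}(1) to the locally bounded $m$-subharmonic function $u_k$ and the negative constant $-j$ yields
\[
{\bf 1}_{\{u>-j\}}\HH(u_j)={\bf 1}_{\{u>-j\}}\HH(u_k).
\]
Since $\{u>-j\}\subset \{u>-k\}$, this gives
\[
{\bf 1}_{\{u>-j\}}\HH(u_j)\leq {\bf 1}_{\{u>-k\}}\HH(u_k),
\]
so the sequence is increasing and $\mu_r(u)$ is well-defined as the weak limit of an increasing family of positive Radon measures.

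For part~(2), fix $k>0$ and compute
\[
{\bf 1}_{\{u>-k\}}\mu_r(u)=\lim_{j\to\infty}{\bf 1}_{\{u>-k\}}{\bf 1}_{\{u>-j\}}\HH(u_j).
\]
For $j\geq k$, the inclusion $\{u>-k\}\subset \{u>-j\}$ makes the product of indicators collapse to ${\bf 1}_{\{u>-k\}}$, and the identity from part~(1) (with indices swapped) gives ${\bf 1}_{\{u>-k\}}\HH(u_j)={\bf 1}_{\{u>-k\}}\HH(u_k)$, proving~\eqref{eq: NP plurifine property}. For part~(3), if $u\in \E$ then $\HH(u)$ is a well-defined positive Radon measure, and applying Lemma~\ref{HVP17}(1) directly to $u$ and $v=-j$ gives
\[
{\bf 1}_{\{u>-j\}}\HH(u_j)={\bf 1}_{\{u>-j\}}\HH(u);
\]
taking the monotone limit as $j\to\infty$, using ${\bf 1}_{\{u>-j\}}\nearrow {\bf 1}_{\{u>-\infty\}}$, yields $\mu_r(u)={\bf 1}_{\{u>-\infty\}}\HH(u)$.

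There is no substantial obstacle: the proof is essentially bookkeeping around Lemma~\ref{HVP17}(1). The only care required is to verify, in each application of that lemma, the set identity $\{u_k>-j\}=\{u>-j\}$ for the relevant index range, and to apply it in parts~(1) and~(2) only to the locally bounded truncations $u_j$, $u_k$, since $\HH(u)$ itself need not make sense when $u\notin \E$.
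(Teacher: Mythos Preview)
Your proof of parts (1) and (2) is correct and essentially identical to the paper's argument: both apply the plurifine locality of Lemma~\ref{HVP17}(1) to the bounded truncations $u_j$, $u_k$ and then use the elementary inclusion $\{u>-k\}\subset\{u>-j\}$.

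For part (3) your route diverges from the paper's. You invoke Lemma~\ref{HVP17}(1) directly with the unbounded function $u\in\E$ and $v=-j$, obtaining ${\bf 1}_{\{u>-j\}}\HH(u_j)={\bf 1}_{\{u>-j\}}\HH(u)$ in one stroke, and then pass to the monotone limit. The paper instead reduces to $u\in\F$, introduces the auxiliary weights $v^j=\max(j^{-1}u,-1)+1$ (which increase to ${\bf 1}_{\{u>-\infty\}}$ and vanish on $\{u\leq -j\}$), approximates $u$ by a decreasing sequence of continuous functions $u^k\in\Eo$, and establishes $v^j\HH(u_j)=v^j\HH(u)$ from the bounded case before letting $j\to+\infty$. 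Your argument is shorter and is valid as soon as one grants Lemma~\ref{HVP17}(1) in the full generality in which it is stated (with $u\in\E$ possibly unbounded); the paper's longer argument effectively re-derives that particular instance of plurifine locality from the bounded/continuous case, making it more self-contained but heavier.
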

\begin{proof}
To prove the first part of the lemma fix $j>k>0$ and observe that $u_k=\max(u_j,-k)$ and
\[
\{u_j>-k\} = \{u>-k\} \subset \{u>-j\}.
\]
Therefore using Lemma~\ref{HVP17}, we get
\begin{flalign}
{\bf 1}_{\{u>-k\}} \HH(u_k) &= {\bf 1}_{\{u_j>-k\}}   \HH(\max(u_j,-k))=  {\bf 1}_{\{u_j>-k\}}   \HH(u_j)\nonumber \\
&  = {\bf 1}_{\{u>-k\}}   \HH(u_j) \leq {\bf 1}_{\{u>-j\}}   \HH(u_j). \label{eq: NP MA}
\end{flalign}
To prove (2) note that
\[
{\bf 1}_{\{u>-k\}} \HH(u_k)  = {\bf 1}_{\{u>-k\}}   \HH(u_j) = {\bf 1}_{\{u>-k\}} {\bf 1}_{\{u>-j\}}   \HH(u_j),
\]
and to get (\ref{eq: NP plurifine property}) it is enough to put $j\to +\infty$.

Finally we shall prove point (3) which says that the measure obtained in point (1) is the regular part of the Hessian measure in the sense of Cegrell.
It is enough to show that for any Borel set $E\subset \Omega\setminus\{u=-\infty\}$ holds
\[
\lim_{j\to +\infty}\HH(u_j)(E\cap {\{ u>-j\}})=\HH(u)(E).
\]
Without lost of generality we can assume that $u\in\F$. For $j>0$ define $m$-subharmonic function by $v^j=\max(j^{-1}u,-1)+1$ and note that $v^j\nearrow {\bf 1}_{\{u>-\infty\}}$, $j\to +\infty$, and $v^j=0$ on the set $\{u\leq -j\}$. We shall prove that
\begin{equation}\label{1}
v^j\HH(u_j)=v^j\HH(u).
\end{equation}
By~\cite{ACH18} there exists decreasing sequence of continuous functions $u^k\in \Eo$ converging pointwise to $u$, as $k\to +\infty$. Then it follows from~\cite{HVP17} that
\[
\lim_{k\to +\infty}v^j\HH((u^k)_j)=v^j\HH(u_j), \ \ \text{and} \ \  \lim_{k\to +\infty}v^j\HH(u^k)=v^j\HH(u).
\]
Recall that $(u^k)_j:=\max(u^k,-j)$ and we have $(u^k)_j=u^k$ on the open set $\{u^k>-j\}$ containing $\{u>-j\}$, so \eqref{1} holds.

Note that by monotone convergence theorem and by (\ref{eq: NP plurifine property})
\[
v^j\HH(u_j)={\bf 1}_{\{u>-j\}}v^j\HH(u_j)={\bf 1}_{\{u>-j\}}v^j\mu_r(u)\nearrow {\bf 1}_{\{u>-\infty\}}\mu_r(u),
\]
as $j\to +\infty$. On the other hand
\[
v^j\HH(u)\nearrow {\bf 1}_{\{u>-\infty\}}\HH(u), \ \ j\to +\infty.
\]
By (\ref{1}) the proof is finished.
\end{proof}

\begin{remark}
    Note that Lemma~\ref{reg} implies that the regular (or non $m$-polar) part $\mu_r(u)$ of the complex Hessian measure can be defined for any $m$-subharmonic function $u$ as a limit of an increasing sequence of measures ${\bf 1}_{\{u > -j\}}\HH(u_j)$. However, in a general situation, it may not be a regular Borel measure, as it can become unbounded near the set $\{u = -\infty\}$.
\end{remark}

We shall prove the following maximum principle concerning the regular part. A total mass version of Theorem~\ref{thm: maximum principle} is documented in Lemma~\ref{HVP17}.

\begin{theorem}\label{thm: maximum principle}
	If $u, v \in \E$, then
	\[
	 \mu_r(\max(u,v))    \geq {\bf 1}_{\{u\geq v\}} \mu_r(u)  +  {\bf 1}_{\{u < v\}} \mu_r(v).
	\]
	If, in addition, $u\leq v$ then
	\[
	 {\bf 1}_{\{u = v\}} \mu_r(v)  \geq {\bf 1}_{\{u = v\}} \mu_r(u).
	\]
\end{theorem}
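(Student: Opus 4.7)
The plan is to reduce the statement to the total-mass maximum principle in Lemma~\ref{HVP17}(2), applied to the bounded truncations $u_j=\max(u,-j)$ and $v_j=\max(v,-j)$, and then pass the resulting pointwise inequality through the approximation procedure supplied by Lemma~\ref{reg}. Set $w_j=\max(u_j,v_j)=\max(\max(u,v),-j)$. Since $u_j,v_j$ are bounded, Lemma~\ref{HVP17}(2) applies with the hypothesis trivially satisfied (the set $\{u_j=v_j=-\infty\}$ is empty), yielding
\[
\HH(w_j)\;\geq\;\mathbf{1}_{\{u_j\geq v_j\}}\HH(u_j)+\mathbf{1}_{\{u_j< v_j\}}\HH(v_j).
\]

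Next I would multiply both sides by $\mathbf{1}_{\{\max(u,v)>-j\}}$ and use the two set inclusions
\[
\{u\geq v\}\cap\{u>-j\}\subseteq\{u_j\geq v_j\}\cap\{\max(u,v)>-j\},
\]
\[
\{u<v\}\cap\{v>-j\}\subseteq\{u_j<v_j\}\cap\{\max(u,v)>-j\},
\]
both of which are checked directly by distinguishing the cases $u>-j$ and $u\leq-j$ (and analogously for $v$). This gives
\[
\mathbf{1}_{\{\max(u,v)>-j\}}\HH(w_j)\;\geq\;\mathbf{1}_{\{u\geq v\}}\mathbf{1}_{\{u>-j\}}\HH(u_j)+\mathbf{1}_{\{u<v\}}\mathbf{1}_{\{v>-j\}}\HH(v_j).
\]
Applying Lemma~\ref{reg}(2), I replace $\mathbf{1}_{\{u>-j\}}\HH(u_j)$ by $\mathbf{1}_{\{u>-j\}}\mu_r(u)$ and likewise for $v$, so the inequality becomes
\[
\mathbf{1}_{\{\max(u,v)>-j\}}\HH(w_j)\;\geq\;\mathbf{1}_{\{u\geq v\}}\mathbf{1}_{\{u>-j\}}\mu_r(u)+\mathbf{1}_{\{u<v\}}\mathbf{1}_{\{v>-j\}}\mu_r(v).
\]

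Letting $j\to+\infty$, the left-hand side increases to $\mu_r(\max(u,v))$ by Lemma~\ref{reg}(1). On the right-hand side, since $u,v\in\E$ the measures $\mu_r(u),\mu_r(v)$ are carried by $\{u>-\infty\}$ and $\{v>-\infty\}$ respectively (Lemma~\ref{reg}(3)), so $\mathbf{1}_{\{u>-j\}}\mu_r(u)\nearrow\mu_r(u)$ and $\mathbf{1}_{\{v>-j\}}\mu_r(v)\nearrow\mu_r(v)$, proving the first inequality. For the second statement, take $u\leq v$ so that $\max(u,v)=v$ and $\{u\geq v\}=\{u=v\}$; the first inequality reads $\mu_r(v)\geq\mathbf{1}_{\{u=v\}}\mu_r(u)+\mathbf{1}_{\{u<v\}}\mu_r(v)$, and subtracting $\mathbf{1}_{\{u<v\}}\mu_r(v)$ (a well-defined positive measure) gives $\mathbf{1}_{\{u=v\}}\mu_r(v)\geq\mathbf{1}_{\{u=v\}}\mu_r(u)$.

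There is no serious obstacle here beyond bookkeeping: the heavy lifting is already in Lemma~\ref{HVP17}(2) and Lemma~\ref{reg}. The only subtlety is confirming the two set inclusions, which is where the truncation level $-j$ interacts delicately with the comparison between $u$ and $v$; this is what forces one to keep the factors $\mathbf{1}_{\{u>-j\}}$ and $\mathbf{1}_{\{v>-j\}}$ on the right-hand side rather than the stronger-looking $\mathbf{1}_{\{\max(u,v)>-j\}}$, and explains why the limit is taken after rewriting via Lemma~\ref{reg}(2) rather than before.
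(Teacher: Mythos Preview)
Your proof is correct and follows essentially the same approach as the paper: truncate to $u_j,v_j$, apply Lemma~\ref{HVP17}(2) to the bounded functions, multiply by a suitable indicator, invoke the plurifine identity~\eqref{eq: NP plurifine property} from Lemma~\ref{reg}, and let $j\to\infty$. The only cosmetic difference is the choice of indicator: the paper multiplies by $\mathbf{1}_{\{\min(u,v)>-k\}}$ (on which $u_j=u$, $v_j=v$, so the sets $\{u_j\geq v_j\}$ and $\{u\geq v\}$ coincide and no separate inclusion check is needed), whereas you multiply by $\mathbf{1}_{\{\max(u,v)>-j\}}$ and compensate with the two set inclusions and the asymmetric factors $\mathbf{1}_{\{u>-j\}}$, $\mathbf{1}_{\{v>-j\}}$ on the right.
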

\begin{proof}
	Let $u_j= \max(u,-j)$, $v_j=\max(v,-j)$, $j>0$. From the maximal principle Lemma~\ref{HVP17} we get
	\begin{equation}\label{MP}
	\HH(\max(u_j,v_j))   \geq {\bf 1}_{\{u_j\geq v_j\}}   \HH(u_j)  +  {\bf 1}_{\{u_j< v_j\}}  \HH(v_j).
	\end{equation}

Note that for $k<j$ we get
\[
\{\min(u,v)>-k\}\subset \{u>-k\}\subset \{u>-j\}
\]
(and the same inclusions hold for the function $v$) and on the set $\{\min(u,v)>-k\}$ holds $u_j=u$ and $v_j=v$. Therefore multiplying (\ref{MP}) with ${\bf 1}_{\{\min(u,v)>-k\}}$, and using~\eqref{eq: NP plurifine property}, we obtain
	\begin{flalign*}
			{\bf 1}_{\{\min(u,v)>-k\}} \mu_r(\max(u,v))   & \geq {\bf 1}_{\{\min(u,v)>-k\}}  {\bf 1}_{\{u\geq v\}}   \mu_r(u)   \\
			&+ {\bf 1}_{\{\min(u,v)>-k\}}  {\bf 1}_{\{u< v\}}  \mu_r(v).
	\end{flalign*}
To finish the proof it is enough to let $k\to +\infty$, since regular parts of Hessian measures vanish on $m$-polar sets.
\end{proof}

We next recall the maximum principle for the singular parts, see~\cite{HVP17}.

\begin{theorem}\label{thm: maximum principle SP}
	If $u, v \in \E$, $u\leq v$ then
	\[
	 \mu_s(v)  \leq \mu_r(u).
	\]
\end{theorem}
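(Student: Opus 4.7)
The strategy is to reduce to the bounded setting via the canonical truncations $u_k = \max(u, -k)$ and $v_k = \max(v, -k)$, apply the maximum principle of Lemma~\ref{HVP17}(2) to compare their Hessian measures, and then pass to the limit $k \to \infty$ using the characterization of the regular part in Lemma~\ref{reg}. Since $u \leq v$ we have $u_k \leq v_k$; moreover, on the sublevel set $\{v \leq -k\}$, which contains $\{v = -\infty\}$, both truncations collapse to the constant $-k$, so the coincidence set $\{u_k = v_k\}$ contains $\{v \leq -k\}$.

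The key inequality comes from Lemma~\ref{HVP17}(2) applied to $(u_k, v_k)$: since $\max(u_k,v_k) = v_k$, one obtains
\[
\HH(v_k) \geq {\bf 1}_{\{u_k = v_k\}} \HH(u_k) + {\bf 1}_{\{u_k < v_k\}} \HH(v_k),
\]
and multiplying by ${\bf 1}_{\{v \leq -k\}}$ yields ${\bf 1}_{\{v \leq -k\}} \HH(v_k) \geq {\bf 1}_{\{v \leq -k\}} \HH(u_k)$. To extract the upper bound required by the theorem, I would dualize via the comparison principle in the bounded Cegrell class applied to $(u_k, v_k)$, namely $\int_{\{u_k < v_k\}} \HH(v_k) \leq \int_{\{u_k < v_k\}} \HH(u_k)$. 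Combined with the plurifine identity ${\bf 1}_{\{u > -k\}} \HH(u_k) = {\bf 1}_{\{u > -k\}} \mu_r(u)$ from Lemma~\ref{reg}(2) (and its analogue for $v$), this isolates the portion of $\HH(v_k)$ supported on $\{v \leq -k\}$ and, in the limit, lets one dominate the resulting measure by the regular part $\mu_r(u)$ of $\HH(u)$.

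Finally I would let $k \to \infty$. On the left, ${\bf 1}_{\{v \leq -k\}} \HH(v_k)$ concentrates weakly on $\{v = -\infty\}$ and produces $\mu_s(v)$, via a test-function argument against $\varphi \in \Eo$ using the decreasing convergence $v_k \searrow v$ together with the standard convergence of Hessian measures. The main obstacle is the direction of the inequality: the direct application of Lemma~\ref{HVP17}(2) only furnishes a lower bound on $\HH(v_k)$ on the coincidence set, so obtaining the stated upper bound $\mu_s(v) \leq \mu_r(u)$ requires a careful synthesis with the comparison principle and the Cegrell--Lebesgue decomposition, in the spirit of the arguments in \cite{HVP17}. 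A secondary subtlety is that $u, v \in \E$ need not lie in $\F$; this forces the entire argument to be localized on a fundamental sequence $\{\Omega_j\} \nearrow \Omega$ before taking the double limit in $j$ and $k$.
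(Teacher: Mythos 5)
First, note that the paper does not actually prove this statement: it is recalled from \cite{HVP17} (see the sentence immediately preceding the theorem), so there is no internal argument to compare yours against. Note also that the inequality as printed is almost certainly a misprint for $\mu_s(v)\leq\mu_s(u)$: the measure $\mu_s(v)$ is carried by the $m$-polar set $\{v=-\infty\}$, while $\mu_r(u)$ charges no $m$-polar set, so the literal inequality $\mu_s(v)\leq\mu_r(u)$ would force $\mu_s(v)=0$, which already fails for $u=v$ with nontrivial singular part. Your attempt does not flag this, and it matters, since you are steering the whole argument toward dominating a measure concentrated on $\{v=-\infty\}$ by $\mu_r(u)$, which cannot succeed.

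More importantly, there is a genuine gap that you identify yourself but do not close: every tool you invoke points in the wrong direction. Lemma~\ref{HVP17}(2) and Theorem~\ref{thm: maximum principle} yield \emph{lower} bounds for $\HH(v_k)$ (resp.\ $\mu_r(v)$) on coincidence sets, and your derived inequality ${\bf 1}_{\{v\le -k\}}\HH(v_k)\ge {\bf 1}_{\{v\le -k\}}\HH(u_k)$ is again a lower bound for the very quantity you must bound from above. The total-mass comparison principle $\int_{\{u_k<v_k\}}\HH(v_k)\le\int_{\{u_k<v_k\}}\HH(u_k)$ is an integrated statement over a set where the truncations differ; it does not localize to a measure inequality on $\{v=-\infty\}$, and no amount of ``dualizing'' combined with the plurifine identity of Lemma~\ref{reg}(2) produces the required upper bound. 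The standard route to $\mu_s(v)\le\mu_s(u)$ (the argument of \cite{ACCP09}, adapted to the Hessian setting in \cite{HVP17}) is different: reduce to $u,v\in\F$, fix a compact $A\subset\{v=-\infty\}\subset\{u=-\infty\}$, and test against functions $h\in\Eo$ decreasing to $-{\bf 1}_A$; repeated integration by parts gives the partial comparison $\int_\Omega(-h)\HH(v)\le\int_\Omega(-h)\HH(u)$ from $u\le v$, and letting $h$ decrease to $-{\bf 1}_A$ yields $\int_A\HH(v)\le\int_A\HH(u)$, hence the inequality of singular parts. Your truncation-and-weak-limit skeleton does not reach this conclusion, so as written the proposal is incomplete.
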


We shall end this section with the following remark concerning the weak convergence of regular and singular parts of the Hessian measure.

\begin{remark}\label{thm: lsc of NP}
It is well known that monotone convergence of $m$-subharmonic functions implies weak convergence of corresponding Hessian measures. Now the question is: Does similar convergence hold for regular and singular parts of Hessian measures? The answer is quite surprising. Arguing as in~\cite{ACLR24},  one can prove that for increasing sequences $u_j\nearrow u$, one has:
    \[
    \mu_r(u_j) \to \mu_r(u)\quad  \text{and} \quad  \mu_s(u_j) \to \mu_s(u).
    \]
Note that a similar result does not hold for decreasing sequences, as indicated in the following example of plurisubharmonic functions $u_j(z)=\max(\log |z|,-j)\searrow u(z)=\log |z|$ in the unit ball in $\C^n$. Instead, we have only that
    \[
    \liminf_{j\to +\infty}\mu_r(u_j)\geq \mu_r(u) \quad  \text{and} \quad \limsup_{j\to +\infty}\mu_s(u_j)\leq \mu_s(u).
    \]
\end{remark}

\section{Rooftop envelopes}\label{Rooftop envelope}

Envelope constructions have been central to classical potential theory since the foundational work by Oskar Perron~\cite{Perron1923}. This line of research was further advanced by Wiener in his series of articles~\cite{Wiener24a,Wiener24b, Wiener25} and was later expanded by Brelot~\cite{Brelot1939}. Additionally, the concept of envelope constructions has proved to be even more crucial in pluripotential theory since 1959, when Bremermann adapted the methodologies of Perron and Carath{\'e}odory to the pluricomplex setting~\cite{Bremermann1959,CC37}. The publication of~\cite{BT76,BT82} established envelope constructions as a fundamental tool in pluripotential theory.

In our study of $m$-subharmonic functions, the perspectives of both potential and pluripotential theories are relevant. As Definition~\ref{m-sh} implies there is an inclusion hierarchy among these function spaces:
\[
\mathcal{PSH}=\mathcal{SH}_n \subset \cdots \subset \mathcal{SH}_1 =\mathcal{SH}\, .
\]
Here, $\mathcal{PSH}$ is the space of plurisubharmonic functions, and $\mathcal{SH}$ the space of subharmonic functions.

Typically, the envelope $\PP(h)$, defined later, is considered for functions that are either continuous or lower semi-continuous. However, in our approach, we use the function $h$ either as the minimum or the difference between two $m$-subharmonic functions. Consequently, we introduce the following definition for the rooftop envelope.

\begin{definition}
    For a function $h : \Omega \to \mathbb{R} \cup \{-\infty\}$, which is bounded from above, we define the envelope $\PP(h)$ as
    \[
    \PP(h)(z) = \left( \sup \{ u(z) : u \in \mathcal{SH}_m(\Omega), u \leq h \text{ quasi-everywhere in } \Omega \}\right)^*,
    \]
    with the convention that $\sup \emptyset = -\infty$. If no $m$-subharmonic function is below $h$ quasi-everywhere, then $\PP(h)$ is defined to be identically $-\infty$.
\end{definition}
Here, quasi-everywhere means outside an $m$-polar set. Recall that a subset $E \subset \Omega$ is $m$-polar if for every $z \in \Omega$, there exists a neighborhood $U$ of $z$ and a function $u \in \mathcal{SH}_m(U)$ such that $E \cap U \subseteq \{u = -\infty\}$.

We shall also consider the Hessian capacity:
\[
\operatorname{Cap}_{m}(E) = \sup \left\{ \int_E \operatorname{H}_m(u) : u \in \mathcal{SH}_m(\Omega), -1 \leq u \leq 0 \right\}.
\]

A function $h$ is said to be \emph{quasi-continuous} if, for each $\varepsilon > 0$, there exists an open set $U$ such that $\operatorname{Cap}_{m}(U, \Omega) < \varepsilon$, and $h$ is continuous on $\Omega \setminus U$. Similarly, a set $E$ is quasi-open if, for each $\varepsilon > 0$, there exists an open set $U$ such that
\[
\operatorname{Cap}_{m}(U \setminus E \cup E \setminus U) \leq \varepsilon.
\]
If a function $u \in \mathcal{SH}_m(\Omega)$ is below $h$ quasi-everywhere, then $\PP(h) \in \mathcal{SH}_m(\Omega)$, and Choquet's lemma implies that there exists an increasing sequence $[u_j] \subset \mathcal{SH}_m(\Omega)$ such that $u_j \leq h$ quasi-everywhere in $\Omega$ and $(\lim_{j \to +\infty} u_j)^* = \PP(h)$. Furthermore, the set
\[
\{z \in \Omega : \lim_{j \to +\infty} u_j(z) < \PP(h)(z)\}
\]
is $m$-polar, and $\PP(h) \leq h$ quasi-everywhere in $\Omega$.

\begin{theorem}\label{thm: envelope}
	If $h$ is quasi-continuous in $\Omega$ and $\PP(h) \not \equiv -\infty$, then
	\[
	\int_{\{\PP(h) < h\}}  \mu_r (\PP(h))   = 0.
	\]
\end{theorem}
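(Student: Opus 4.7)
The plan is to reduce to the bounded setting via truncation, handle continuous $h$ by a local balayage, and pass to general quasi-continuous $h$ by approximation. Writing $u:=\PP(h)$ and $u_k:=\max(u,-k)$, Lemma~\ref{reg}(1)--(2) gives
\[
\mu_r(u)=\lim_{k\to+\infty}{\bf 1}_{\{u>-k\}}\HH(u_k)
\]
as an increasing limit of positive Radon measures, so it suffices to show, for each $k>0$, that the bounded measure $\HH(u_k)$ puts no mass on $\{u>-k\}\cap\{u<h\}$; recall $\HH(u_k)$ is absolutely continuous with respect to $\Capo$.

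Assume first that $h$ is continuous, so $\{u<h-\delta\}$ is open for every $\delta>0$. If the stated measure were positive, inner regularity would produce $\delta>0$ and a compact $K\subset\{u\ge-k+2\delta\}\cap\{u<h-\delta\}$ with $\HH(u_k)(K)>0$. Pick $z_0\in K$; by upper semicontinuity of $u$ at $z_0$ and continuity of $h$, choose a ball $B=B(z_0,r)\Subset\Omega$ so small that $\sup_B u<u(z_0)+\delta/4$ and $\inf_B h>h(z_0)-\delta/4$. Let $v$ be the Perron--Bremermann solution to $\HH(v)=0$ on $B$ with boundary data $u_k|_{\partial B}=u|_{\partial B}$. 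The plurifine property from Lemma~\ref{HVP17}(1) combined with $\HH(v)=0$ forces the comparison inequality $v\ge u_k$ to be strict on a set of positive $\HH(u_k)$-mass in $B$, while the maximum principle gives $v\le\sup_{\partial B}u<\inf_B h\le h$ on $B$. Gluing $\tilde v:=v$ on $B$ and $\tilde v:=u$ on $\Omega\setminus B$, justified by $v=u$ on $\partial B$, yields $\tilde v\in\MSH{\Omega}$ with $\tilde v\le h$, $\tilde v\ge u$, and $\tilde v>u$ on a set of positive measure, contradicting $u=\PP(h)$.

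For general quasi-continuous $h$, I would approximate $h$ in $\Capo$-capacity by continuous functions $h_j$ supplied by the quasi-continuity, apply the continuous case to $u_j:=\PP(h_j)$, and pass to the limit using capacity convergence of the envelopes together with convergence of their regular Hessian parts in the spirit of Remark~\ref{thm: lsc of NP}. The main obstacle is precisely this passage to the limit: one must ensure that the vanishing of $\mu_r(u_j)$ on $\{u_j<h_j\}$ descends to the vanishing of $\mu_r(u)$ on $\{u<h\}$, which requires careful control of the small-capacity exceptional sets supplied by quasi-continuity together with the absolute continuity $\HH(u_k)\ll\Capo$ to absorb them into the $m$-polar sets on which $\mu_r$ does not charge.
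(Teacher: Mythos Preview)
Your local balayage for continuous $h$ has the right idea, though the gluing step is loose: the Perron solution $v$ on $B$ has only upper semicontinuous boundary data $u_k|_{\partial B}$, so ``$v=u$ on $\partial B$'' is not literally true and you should instead glue via $\tilde v:=\max(v,u)$ (extended by $u$ outside $B$), using $\limsup_{B\ni z\to\zeta}v(z)\le u_k(\zeta)$ on an $m$-regular ball. This is repairable.

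The genuine gap is the passage from continuous to quasi-continuous $h$, which you yourself flag as ``the main obstacle'' and do not resolve. Quasi-continuity supplies, for each $\varepsilon>0$, an open set $U_\varepsilon$ of capacity $<\varepsilon$ with $h|_{\Omega\setminus U_\varepsilon}$ continuous; any continuous extension $h_j$ to $\Omega$ is uncontrolled on $U_\varepsilon$, and since the envelope $\PP$ is defined by inequalities holding quasi-everywhere (i.e., off \emph{$m$-polar} sets, not off small-capacity sets), there is no reason for $\PP(h_j)$ to converge to $\PP(h)$. Without monotonicity you cannot invoke Remark~\ref{thm: lsc of NP}, and the absolute continuity $\HH(u_k)\ll\Capo$ controls masses of the exceptional sets but not the envelopes themselves. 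So the plan as written does not close.

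The paper avoids this difficulty by approximating differently: it produces a \emph{decreasing} sequence $h_j$ of lower semicontinuous, bounded-below functions with $h_j\searrow h$ quasi-everywhere, for which the balayage statement $\HH(\PP(h_j))(\{\PP(h_j)<h_j\})=0$ is known. Monotonicity then forces $\PP(h_j)\searrow\PP(h)$, and one passes to the limit by working on the quasi-open set $\{\PP(h_k)<h\}\cap\{\PP(h)>-t\}$ with the uniformly bounded sequence $\max(\PP(h_j),-t)$, whose Hessian measures are dominated by $\Capo$; finally $k\to+\infty$ and $t\to+\infty$. The monotone approximation is the missing ingredient in your approach.
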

\begin{proof}
Note that there exists a sequence $[h_j]$ of bounded from below and lower semi-continuous functions in $\Omega$ such that $h_j \searrow h$ quasi-everywhere in $\Omega$. Now we can adapt well known balayage procedure to conclude that (cf.~\cite[Lemma~4.9]{Lu15}),
	\[
	\int_{\{\PP(h_j) < h_j\}} \HH(\PP(h_j))=0, \quad \text{for all } j\in \mathbb N.
	\]
	Now fix $k<j$, and note that $\{\PP(h_k)<h\} \subset \{\PP(h_j)<h_j\}$, so
	\[
	\int_{\{\PP(h_k) < h\}} \HH(P(h_j)) =0.
	\]
	Let $t>0$, we can use Lemma~\ref{HVP17} to get
	\[
 \begin{aligned}
	&\int_{\{\PP(h_k) < h\}} {\bf 1}_{\{\PP(h)>-t\}} \HH(\max(\PP(h_j),-t))\\
 &\leq \int_{\{\PP(h_k) < h\}} {\bf 1}_{\{\PP(h_j)>-t\}} \HH(\max(\PP(h_j),-t)) \\
 &=\int_{\{\PP(h_k) < h\}} {\bf 1}_{\{\PP(h_j)>-t\}} \HH(\PP(h_j))=0.
\end{aligned}
 \]
The set $\{\PP(h_k) < h\}\cap\{\PP(h)>-t\}$ is quasi-open and the sequence $\max(\PP(h_j),-t)$ is uniformly bounded, therefore corresponding sequence of the Hessian measures is uniformly bounded by $\Capo$ and then 	
\[
 \begin{aligned}
    & 0=\liminf_{j\to +\infty}\int_{\{\PP(h_k) < h, \PP(h)>-t\}} \HH(\max(\PP(h_j),-t))\\
    &\geq \int_{\{\PP(h_k) < h\}}{\bf 1}_{\{\PP(h)>-t\}} \HH(\max(\PP(h),-t)).
 \end{aligned}
	\]
To finish the proof it is enough to let $k\to +\infty$, and then $t\to +\infty$.
\end{proof}

Now, we shall introduce the definition of the rooftop envelope of $m$-subharmonic functions, originated from \cite{Dar15}, \cite{DR16}, \cite{Dar17AJM}. The \emph{rooftop envelope} $\PP(u,v)$, for any two $m$-subharmonic functions $u$ and $v$, is defined as the $m$-subharmonic envelope of $\min(u,v)$. This represents the largest $m$-subharmonic function that lies below $\min(u,v)$, e.g., $\PP(u,v) = \PP(\min(u,v))$. The rooftop envelope is always well-posed for $u,v \in \mathcal{E}_m$. Since $u+v \leq \min(u,v)$, it follows that $u+v \leq \PP(u,v)$ and therefore $\PP(u,v) \in \mathcal{E}_m$.

We now describe the behavior of the Hessian measure of the rooftop envelopes.

\begin{theorem}\label{cor: NP of rooftop}
Let $u,v\in \E$ then
    \[
    \mu_r(\PP(u,v))\leq {\bf 1}_{\{\PP(u,v)=u\}}\mu_r(u) + {\bf 1}_{\{\PP(u,v)=v, \PP(u,v)<u\}} \mu_r(v).
    \]
In particular, if $\mu$ is a positive measure such that $\mu_r(u)\leq \mu$ and $\mu_r(v)\leq \mu$, then $\mu_r(\PP(u,v))\leq \mu$.
\end{theorem}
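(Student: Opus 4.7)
Set $\varphi := \PP(u,v) = \PP(\min(u,v))$. Since $u+v \leq \min(u,v)$ gives $u+v\leq \varphi$, we have $\varphi \in \E$, and by construction $\varphi \leq \min(u,v)$ quasi-everywhere on $\Omega$. The plan is to (i) use the envelope theorem just proved to pin $\mu_r(\varphi)$ onto the contact set $\{\varphi=\min(u,v)\}$, and then (ii) apply the second part of the maximum principle (Theorem~\ref{thm: maximum principle}) on each piece of the natural decomposition of this set.

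For step (i), note that $m$-subharmonic functions in $\E$ are quasi-continuous, hence $h:=\min(u,v)$ is quasi-continuous. Theorem~\ref{thm: envelope} then yields
\[
\int_{\{\varphi < h\}} \mu_r(\varphi) = 0,
\]
so $\mu_r(\varphi)$ is carried by $\{\varphi = \min(u,v)\}$. Writing this contact set as the disjoint union
\[
\{\varphi = \min(u,v)\} = \{\varphi = u\}\,\cup\,\{\varphi = v,\ \varphi < u\},
\]
we obtain
\[
\mu_r(\varphi) = {\bf 1}_{\{\varphi=u\}}\,\mu_r(\varphi) \;+\; {\bf 1}_{\{\varphi=v,\,\varphi<u\}}\,\mu_r(\varphi).
\]

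For step (ii), since $\varphi \leq u$ everywhere (modulo a polar set on which the regular parts do not charge), the second statement of Theorem~\ref{thm: maximum principle} gives
\[
{\bf 1}_{\{\varphi = u\}}\,\mu_r(\varphi) \leq {\bf 1}_{\{\varphi = u\}}\,\mu_r(u),
\]
and similarly $\varphi \leq v$ gives ${\bf 1}_{\{\varphi=v\}}\,\mu_r(\varphi) \leq {\bf 1}_{\{\varphi=v\}}\,\mu_r(v)$, which in particular holds after restriction to $\{\varphi=v,\ \varphi<u\}$. Plugging these into the decomposition above produces
\[
\mu_r(\varphi) \leq {\bf 1}_{\{\varphi=u\}}\,\mu_r(u) + {\bf 1}_{\{\varphi=v,\,\varphi<u\}}\,\mu_r(v),
\]
which is the claimed inequality. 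The ``in particular'' statement is then immediate: if $\mu_r(u),\mu_r(v)\leq \mu$, then the right-hand side is at most $({\bf 1}_{\{\varphi=u\}} + {\bf 1}_{\{\varphi=v,\,\varphi<u\}})\,\mu \leq \mu$.

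The only subtle point — and the main thing to verify carefully — is the clean accounting of polar exceptional sets: the inequality $\varphi\leq u$ (and $\varphi\leq v$) holds only quasi-everywhere, and $\mu_r(\varphi)$ is only carried by the contact set up to an $m$-polar set. Both defects are harmless because all measures in sight are regular parts and therefore do not charge $m$-polar sets, so the identifications $\{\varphi=u\}$ and $\{\varphi=v,\varphi<u\}$ may be used interchangeably with their quasi-everywhere versions without changing the integrals.
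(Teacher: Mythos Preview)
Your proposal is correct and follows essentially the same route as the paper: invoke Theorem~\ref{thm: envelope} to localize $\mu_r(\PP(u,v))$ on the contact set, split that set as $\{\PP(u,v)=u\}\cup\{\PP(u,v)=v,\ \PP(u,v)<u\}$, and then apply the second clause of Theorem~\ref{thm: maximum principle} on each piece. Your extra remarks on quasi-everywhere exceptional sets are sound (and in fact $\PP(u,v)\leq u$ and $\PP(u,v)\leq v$ hold everywhere, since both sides are subharmonic and the inequality holds off a Lebesgue-null set), so the accounting is even cleaner than you feared.
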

\begin{proof}
    It follows from Theorem~\ref{thm: envelope}, that $\mu_r(\PP(u,v))$ is supported on the contact set $K=K_u\cup K_v$, where
    \[
    K_u=\{\PP(u,v)=u\}\quad \text{and}\quad K_v=\{\PP(u,v)=v\}\cap \{\PP(u,v)<u\}.
    \]
    Theorem \ref{thm: maximum principle} then yields
\[
\begin{aligned}
    {\bf 1}_{K_u} \mu_r(\PP(u,v)) &\leq {\bf 1}_{K_u} \mu_r(u), \\
    {\bf 1}_{K_v} \mu_r(\PP(u,v)) &\leq {\bf 1}_{K_v} \mu_r(v),
\end{aligned}
\]
which finish the proof.
\end{proof}

The above theorem was proved for $m$-subharmonic functions from the Cegrell class with finite energy in~\cite{AC23G}.

\section{Decomposition of $m$-subharmonic functions} \label{sect: uniqueness}

The objective of this section is to establish a novel decomposition theorem for $m$-subharmonic functions within the class $\Ker$, as outlined in the introduction (Theorem~A). As a consequence of the rooftop techniques developed in Section~\ref{Rooftop envelope}, we establish in Theorem~\ref{thm: uniqueness in N} a general comparison principle in the Cegrell class $\Ker(H)$. This principle will serve as the foundational tool in Theorem~A, and later also in Theorem~B as well as in Theorem~C.

\subsection{Comparison principle}

We begin by defining two relations, $\preceq$ and $\simeq$, among $m$-subharmonic functions. These relations relate to the singularities of $m$-subharmonic functions.

\begin{definition}\label{def: singular}
  For $m$-subharmonic functions $u$ and $v$ defined in $\Omega$, we say $u$ \emph{is more singular than} $v$ if, for any compact subset $K \Subset \Omega$, there exists a constant $C_K$ such that $u \leq v + C_K$ throughout $K$. This relation is denoted by $u \preceq v$. If $u$ is more singular than $v$ and $v$ is more singular than $u$, we say that $u$ and $v$ share \emph{identical singularities}, we denote this by $u \simeq v$.
\end{definition}

We proceed with the following lemma, whose counterpart for plurisubharmonic functions was proved in~\cite{ACCP09}. It asserts that two $m$-subharmonic functions sharing identical singularities also share identical singular Hessian measures.

\begin{lemma}\label{lem: ACCP09 lemma 4.1}
If $u,v\in \E$ and $w\in \mathcal E_m^a$ are such that $|u-v|\leq -w$, then $\mu_s(u)=\mu_s(v)$.
In particular if $u \simeq v$, then $\mu_s(u) = \mu_s(v)$.
\end{lemma}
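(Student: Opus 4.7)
The plan is to combine the maximum principle for singular Hessian measures (Theorem~\ref{thm: maximum principle SP}, in its natural form $\varphi\leq\psi \Rightarrow \mu_s(\psi)\leq\mu_s(\varphi)$) with the key identity that adding a function from $\mathcal{E}_m^a$ leaves the singular part unchanged. Since $w\leq 0$, the hypothesis $|u-v|\leq -w$ is equivalent to the two pointwise inequalities $u+w\leq v$ and $v+w\leq u$. Applying the maximum principle to each yields
\[
\mu_s(v)\leq \mu_s(u+w) \quad\text{and}\quad \mu_s(u)\leq \mu_s(v+w),
\]
so the whole statement reduces to proving $\mu_s(u+w)=\mu_s(u)$ (and symmetrically $\mu_s(v+w)=\mu_s(v)$) for any $u\in\mathcal{E}_m$ and $w\in\mathcal{E}_m^a$.

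To establish this identity I would expand, using the multilinearity of $dd^c$,
\[
H_m(u+w) = H_m(u) + R,\qquad R = \sum_{k=1}^{m}\binom{m}{k}\,(dd^c u)^{m-k}\wedge (dd^c w)^{k}\wedge (dd^c|z|^2)^{n-m}.
\]
Every summand of $R$ carries at least one factor $dd^c w$, and the hypothesis $w\in\mathcal{E}_m^a$ forces such mixed measures not to charge any $m$-polar set: this is classical for the bounded truncations $u_j=\max(u,-j)$ via Chern--Levine--Nirenberg type estimates, and extends to $u$ by the monotone limit along the level sets $\{u>-j\}$, exactly as in the proof of Lemma~\ref{reg}. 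Because $\{u+w=-\infty\}$ is $m$-polar, restricting $H_m(u+w)$ to this set kills $R$ and gives $\mu_s(u+w)=\mathbf{1}_{\{u+w=-\infty\}}H_m(u)$. Splitting
\[
\{u+w=-\infty\} = \{u=-\infty\}\;\cup\;\bigl(\{u>-\infty\}\cap\{w=-\infty\}\bigr),
\]
the second piece contributes nothing because $\mu_r(u)=\mathbf{1}_{\{u>-\infty\}}H_m(u)$ does not charge the $m$-polar set $\{w=-\infty\}$. Hence $\mu_s(u+w)=\mathbf{1}_{\{u=-\infty\}}H_m(u)=\mu_s(u)$, and by symmetry $\mu_s(v+w)=\mu_s(v)$. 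Inserting this in the two inequalities above squeezes to $\mu_s(u)=\mu_s(v)$.

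For the ``in particular'' clause, $u\simeq v$ means $|u-v|$ is locally bounded. Since the Cegrell--Lebesgue decomposition and hence $\mu_s$ is a local object, it suffices to verify the equality on each element $\Omega_j$ of a fundamental sequence, where $|u-v|\leq C_j$ and a rescaled negative exhaustion $w_j\in\mathcal{E}_m^0(\Omega_j)\subset\mathcal{E}_m^a(\Omega_j)$ satisfies $|u-v|\leq -w_j$, so that the first part applies locally. The main obstacle is the claim in Step~2 that the mixed terms in $R$ fail to charge $m$-polar sets when $u\in\mathcal{E}_m$ is allowed to be unbounded: the bounded case is standard Cegrell-type pluripotential theory, but the unbounded case requires care, since one must propagate the non-charging property through the truncation $u_j=\max(u,-j)$ and reconcile it, via Lemma~\ref{reg}, with the limiting behaviour of the regular and singular parts on $\{u=-\infty\}$.
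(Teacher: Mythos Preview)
Your proposal is correct and is precisely the adaptation of \cite[Lemma~4.12]{ACCP09} that the paper's one-line proof invokes: apply the monotonicity of $\mu_s$ (Theorem~\ref{thm: maximum principle SP}) to $u+w\leq v$ and $v+w\leq u$, then show $\mu_s(u+w)=\mu_s(u)$ by expanding $\HH(u+w)$ multilinearly and using that any mixed Hessian measure carrying a factor $dd^c w$ with $w\in\mathcal{E}_m^a$ puts no mass on $m$-polar sets. One small slip in the ``in particular'' clause: an exhaustion $w_j\in\mathcal{E}_m^0(\Omega_j)$ tends to $0$ on $\partial\Omega_j$ and therefore cannot satisfy $|u-v|\leq -w_j$ throughout $\Omega_j$; just take the constant $w_j=-C_j$, which lies in $\mathcal{E}_m^a(\Omega_j)$, and your localization goes through unchanged.
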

\begin{proof}
    The proof is the direct adaptation for $m$-subharmonic function of the proof of Lemma 4.12 in~\cite{ACCP09}.
\end{proof}

The converse result is not true, two $m$-subharmonic functions with different type of singularities can have the same singular Hessian measure. However if we assume that $u \preceq v$ then it turns out that the difference $u-v$ is not very singular.

\begin{lemma}\label{prop: P(u-v) is in Ea}
    Let $u,v \in \E$ and $u \preceq v$.
    \begin{enumerate}
    \item If in addition ${\bf 1}_{\{u = -\infty\}}\HH(u) = {\bf 1}_{\{v = -\infty\}}\HH(v)$,
    then $\HH(\PP(u-v,0)) \leq \mu_r(u)$. In particular, $\PP(u-v,0) \in \mathcal E_{m}^a$.
    \item If in addition $\HH(u) \leq \HH(v)$, then $\HH(P(u-v,0)) = 0$.
\end{enumerate}
\end{lemma}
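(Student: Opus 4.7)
Set $\varphi := \PP(\min(u-v, 0))$. Since $u, v \leq 0$ in $\E$, we have $u \leq u-v$ and $u \leq 0$, hence $u \leq \min(u-v, 0)$; this exhibits $u$ as an $m$-subharmonic minorant and gives $\varphi \geq u$, so $\varphi \in \E$ with $u \leq \varphi \leq 0$. The function $h := \min(u-v, 0)$ is quasi-continuous and bounded above, so Theorem~\ref{thm: envelope} ensures $\mu_r(\varphi)$ is concentrated on $\{\varphi = h\}$; I would decompose this as $\{\varphi = 0\} \cup C$, where $C := \{u < v,\, \varphi = u - v\}$. On $\{\varphi = 0\}$, Theorem~\ref{thm: maximum principle} applied to $\varphi \leq 0$ and the zero function in $\E$ gives ${\bf 1}_{\{\varphi = 0\}} \mu_r(\varphi) \leq \mu_r(0) = 0$.

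The crux is the analysis on $C$, where $v + \varphi = u$. Introduce truncations $u_k = \max(u,-k)$, $v_k = \max(v,-k)$, $\varphi_k = \max(\varphi,-k)$; on $U_k := C \cap \{u > -k\}$ one verifies $u_k = u$, $v_k = v$, $\varphi_k = \varphi$ (using $v \geq u > -k$ and $\varphi = u-v \geq u > -k$), so $v_k + \varphi_k = u_k$ on $U_k$. The binomial Demailly-type expansion for the Hessian of a sum of bounded $m$-subharmonic functions,
\[
\HH(v_k + \varphi_k) = \sum_{i=0}^m \binom{m}{i}(dd^c v_k)^i \wedge (dd^c \varphi_k)^{m-i} \wedge (dd^c|z|^2)^{n-m},
\]
consists of non-negative summands, yielding $\HH(v_k + \varphi_k) \geq \HH(v_k) + \HH(\varphi_k)$. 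To transfer this to $u_k$ on the equality set $U_k$, which is not obviously quasi-open, I would apply Lemma~\ref{HVP17} to $f_\varepsilon := \max(v_k + \varphi_k, u_k - \varepsilon)$ and pass to $\varepsilon \to 0$ using $f_\varepsilon \nearrow u_k$ (monotone convergence of $\HH$), obtaining ${\bf 1}_{U_k} \HH(u_k) \geq {\bf 1}_{U_k}(\HH(v_k) + \HH(\varphi_k))$. Lemma~\ref{reg} then identifies ${\bf 1}_{\{u > -k\}}\HH(u_k) = {\bf 1}_{\{u > -k\}}\mu_r(u)$ (and likewise for $v, \varphi$); letting $k \to \infty$, so that $U_k \nearrow C \cap \{u > -\infty\}$, gives ${\bf 1}_C\mu_r(\varphi) + {\bf 1}_C\mu_r(v) \leq {\bf 1}_C\mu_r(u)$, and hence $\mu_r(\varphi) \leq \mu_r(u)$ globally.

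For part~(1), the singular part $\mu_s(\varphi)$ is pinned down using the hypothesis $\mu_s(u) = \mu_s(v)$: Theorem~\ref{thm: maximum principle SP} applied to $\varphi \geq u$ dominates $\mu_s(\varphi)$ by $\mu_s(u) = \mu_s(v)$ and concentrates it on $\{v = -\infty\}$, and a parallel Demailly-type bound on $v + \varphi \leq u$ combined with the equality of singular parts forces $\mu_s(\varphi) = 0$. This yields $\HH(\varphi) = \mu_r(\varphi) \leq \mu_r(u)$, and since $\mu_r(u)$ charges no $m$-polar set, $\varphi \in \mathcal{E}_{m}^a$. For part~(2), the stronger hypothesis $\HH(u) \leq \HH(v)$ descends to $\HH(u_k) \leq \HH(v_k)$ on $U_k$ after identifying the truncated Hessians with the regular parts, so the inequality ${\bf 1}_{U_k}(\HH(v_k) + \HH(\varphi_k)) \leq {\bf 1}_{U_k}\HH(u_k) \leq {\bf 1}_{U_k}\HH(v_k)$ forces $\HH(\varphi_k) = 0$ on $U_k$, hence $\HH(\varphi) = 0$ in the limit. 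The main obstacle I foresee is the rigorous handling of $\mu_s(\varphi)$ in part~(1): extracting the vanishing of the singular part from the hypothesis on $\mu_s(u), \mu_s(v)$ requires a careful interplay between Theorem~\ref{thm: maximum principle SP}, the Demailly expansion of $\HH(v + \varphi)$, and the geometry of the pole sets $\{u = -\infty\}$ and $\{v = -\infty\}$.
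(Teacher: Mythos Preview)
Your treatment of the regular part is sound: the inequality ${\bf 1}_{U_k}\HH(u_k)\geq {\bf 1}_{U_k}\bigl(\HH(v_k)+\HH(\varphi_k)\bigr)$ is exactly the contact-set inequality of Theorem~\ref{thm: maximum principle} applied to the bounded pair $v_k+\varphi_k\leq u_k$, and Lemma~\ref{reg} then lets you pass to $\mu_r(\varphi)\leq\mu_r(u)$. So far this parallels the paper's contact-set argument, only carried out through truncations rather than through the bounded approximants $w_j=\PP(\max(u,v-j)-v,0)$.

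The genuine gap is the singular part, which you yourself flag. Your sketch ``Theorem~\ref{thm: maximum principle SP} plus a Demailly bound on $v+\varphi\leq u$'' does not close. From $u\leq\varphi$ you get $\mu_s(\varphi)\leq\mu_s(u)=\mu_s(v)$, so $\mu_s(\varphi)$ is carried by $\{v=-\infty\}$; but on that set nothing forces it to vanish. The superadditivity $\HH(v+\varphi)\geq\HH(v)+\HH(\varphi)$ restricted to the polar set gives $\mu_s(v+\varphi)\geq\mu_s(v)+\mu_s(\varphi)$, while Theorem~\ref{thm: maximum principle SP} applied to $v+\varphi\leq u$ gives $\mu_s(u)\leq\mu_s(v+\varphi)$; both inequalities point the same way and yield only $\mu_s(u)\leq\mu_s(v)+\mu_s(\varphi)$, i.e.\ $\mu_s(\varphi)\geq 0$, which is vacuous. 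To reverse this you would need $\mu_s(v+\varphi)\leq\mu_s(u)$, e.g.\ via Lemma~\ref{lem: ACCP09 lemma 4.1}, but that requires $|u-(v+\varphi)|\leq -w$ with $w\in\mathcal{E}_m^a$; the natural candidate is $w=\varphi$ itself, which is circular. The paper sidesteps the whole issue: by working with $\varphi_j=\max(u,v-j)$ one has $\varphi_j\simeq v$, so Lemma~\ref{lem: ACCP09 lemma 4.1} gives $\mu_s(\varphi_j)=\mu_s(v)=\mu_s(u)$ directly, and the envelopes $w_j=\PP(\varphi_j-v,0)$ are \emph{bounded}, hence have no singular part at all; the limit $\HH(w_j)\to\HH(\varphi)$ then yields $\HH(\varphi)\leq\mu_r(u)$ in one stroke.

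The same gap propagates to your Part~(2): your truncation argument only shows ${\bf 1}_{U_k}\HH(\varphi_k)=0$, hence $\mu_r(\varphi)=0$, but you never establish $\mu_s(\varphi)=0$ before concluding $\HH(\varphi)=0$. The paper first checks that $\HH(u)\leq\HH(v)$ together with $u\preceq v$ forces $\mu_s(u)=\mu_s(v)$, then invokes Part~(1) to get $\varphi\in\mathcal{E}_m^a$, and only then runs the contact-set argument for $\mu_r(\varphi)=0$.
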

\begin{proof}
\emph{Part (1).} Define $\varphi_j = \max(u, v - j)$ for any $j > 0$. The function $\varphi_j$ belongs to $\E$ and $\varphi_j \simeq v$. By Lemma~\ref{lem: ACCP09 lemma 4.1}, we have:
\[
{\bf 1}_{\{\varphi_j = -\infty\}} \HH(\varphi_j) = {\bf 1}_{\{v = -\infty\}} \HH(v) = {\bf 1}_{\{u = -\infty\}} \HH(u).
\]
As $\varphi_j \searrow u$, it follows from~\cite{Lu15} that $\HH(\varphi_j)$ converges weakly to $\HH(u)$. Consequently, by the Cegrell decomposition:
\[
\HH(\varphi_j) = \mu_r(\varphi_j) + {\bf 1}_{\{u = -\infty\}} \HH(u) \rightarrow \mu_r(u) + {\bf 1}_{\{u = -\infty\}} \HH(u),
\]
implying that $\mu_r(\varphi_j)$ converges weakly to $\mu_r(u)$.

Next, define $w_j = \PP(\varphi_j - v, 0)$, observing that $w_j$ belongs to $\E \cap L^{\infty}(\Omega)$ and satisfies $w_j + v \leq \varphi_j$, with equality on the contact set $K_j = \{w_j + v = \varphi_j\}$. By Theorem~\ref{thm: maximum principle} and Theorem~\ref{thm: envelope}, we obtain:
\[
\HH(w_j) = {\bf 1}_{K_j} \HH(w_j) \leq {\bf 1}_{K_j} \mu_r(w_j + v) \leq {\bf 1}_{K_j} \mu_r(\varphi_j) \leq \mu_r(\varphi_j).
\]
Since $w_j \searrow w = \PP(u-v, 0) \in \E$, and given the weak convergence $\mu_r(\varphi_j) \to \mu_r(u)$ from~\cite{HVP17}, it follows that $\HH(w) \leq \mu_r(u)$, ensuring $w \in \mathcal E_{m}^a$.

\emph{Part (2).} Given that $\HH(u) \leq \HH(v)$ and $u \preceq v$, we derive from~\cite{HVP17} the following:
\begin{multline*}
{\bf 1}_{\{v=-\infty\}} \HH(v) \leq {\bf 1}_{\{u=-\infty\}} \HH(u) \leq {\bf 1}_{\{u=-\infty\}} \HH(v) \\
= {\bf 1}_{\{u=-\infty\}} ({\bf 1}_{\{v=-\infty\}} \mu_s(v) + \mu_r(v)) \\
= {\bf 1}_{\{u=-\infty\}} {\bf 1}_{\{v=-\infty\}} \mu_s(v) = {\bf 1}_{\{v=-\infty\}} \HH(v).
\end{multline*}
The final equality follows because $\mu_r(v)$ does not put mass on the $m$-polar set $\{u=-\infty\}$. Therefore, we have
\[
{\bf 1}_{\{u=-\infty\}} \HH(u) = {\bf 1}_{\{v=-\infty\}} \HH(v),
\]
implying that
\[
\mu_r(u) \leq \mu_r(v).
\]
Define $w = \PP(u-v, 0)$ and defining  $K = \{w + v = u\}$. From Part (1), we know $w \in \mathcal{E}_{m}^a$. Given that $w + v \leq u$ with equality on $K$, application of Theorem~\ref{thm: maximum principle} yields
\[
{\bf 1}_K \mu_r(w) + {\bf 1}_K \mu_r(v) \leq {\bf 1}_K \mu_r(u) \leq {\bf 1}_K \mu_r(v),
\]
leading to ${\bf 1}_K \mu_r(w) = 0$. Moreover, according to Theorem~\ref{thm: envelope}, $\mu_r(w)$ is supported on $K$, hence $\mu_r(w) = 0$. This completes the proof.
\end{proof}

Before establishing the general comparison principle, let us start by examining a special case.

\begin{proposition}\label{prop: uniqueness MA zero}
    If $u\in \Ker$ and $(dd^c u)^n=0$, then $u=0$.
\end{proposition}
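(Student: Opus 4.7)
The plan is to exploit two structural facts: (i) $\tilde u$ is by construction the \emph{smallest} maximal $m$-subharmonic majorant of $u$, and (ii) the hypothesis $H_m(u)=0$ says that $u$ itself is a maximal $m$-subharmonic function. These two observations combine to give the conclusion in a short chain of inequalities.

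Concretely, the argument splits into two matching bounds. First, since $u\in\Ker$ we have $\tilde u=0$, and the majorant property $\tilde u\ge u$ built into the construction immediately yields $u\le 0$ on $\Omega$. Second, because $H_m(u)=0$, the function $u$ is itself maximal, and being trivially $\ge u$ it belongs to the family of maximal $m$-subharmonic majorants of $u$; the minimality of $\tilde u$ among this family then forces $\tilde u\le u$, i.e., $0\le u$. Combining the two bounds gives $u\equiv 0$.

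The only step that might require justification is the minimality of $\tilde u$, which is the qualitative content behind the phrase ``smallest maximal majorant.'' For any maximal majorant $v$ of $u$ and any $\varphi$ entering the Perron supremum defining $u^j$, one verifies $\varphi\le v$ in two regions. On the set where the defining constraint $\varphi\le u$ is active one has $\varphi\le u\le v$ directly; on the complementary region the function $\max(\varphi,v)$ is $m$-subharmonic and equals $v$ on its relative boundary, so by the maximum principle for $m$-subharmonic functions (Lemma~\ref{HVP17}) together with the maximality $H_m(v)=0$ one concludes $\max(\varphi,v)=v$, hence $\varphi\le v$. Taking the supremum in $\varphi$ and then passing to the upper-regularized limit as $j\to\infty$ gives $\tilde u\le v$, which is exactly the minimality invoked above.
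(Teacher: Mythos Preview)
Your approach is different from the paper's and, if it worked, would be considerably shorter. The paper deliberately avoids this route: it first handles $u\in\F$ by a total-mass argument (where the bounded comparison principle suffices) and then bootstraps to $\Ker$ via the envelope $\PP(u-u^j)$ and Lemma~\ref{prop: P(u-v) is in Ea}. Your argument instead tries to use the description of $\tilde u$ as the smallest maximal majorant directly.

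The gap is in the passage from ``$H_m(u)=0$'' to ``$u$ is maximal in the comparison sense.'' Your third paragraph proves correctly that $\tilde u\le v$ for any \emph{maximal} majorant $v$ (in the comparison sense: for every $G\Subset\Omega$, any $m$-subharmonic competitor dominated by $v$ on $\partial G$ is dominated on $G$). But to apply this with $v=u$ you must know that an unbounded $u\in\E$ with $H_m(u)=0$ is maximal in this sense. That is exactly the nontrivial point. For bounded functions it is the classical Bedford--Taylor/B{\l}ocki equivalence; for unbounded functions in $\E$ it requires an argument, and your appeal to Lemma~\ref{HVP17} does not supply one: that lemma records the plurifine locality of Hessian measures, not a domination principle that would yield $\varphi\le v$ on a subdomain from boundary data and $H_m(v)=0$. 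Concretely, on the complementary region your competitor $\varphi$ may be bounded while $u$ is not, and the usual comparison principle is stated for bounded functions; pushing it to $u\in\E$ is essentially the content of Proposition~\ref{prop: uniqueness MA zero} (locally), so the argument risks circularity. This is precisely why the paper's proof takes the detour through $\F$ and the rooftop Lemma~\ref{prop: P(u-v) is in Ea}.

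A secondary point: the paper's displayed definition of $u^j$ has the constraint on $\bar\Omega_j$, which appears to be a typo (with that reading one gets $\tilde u=u$ identically); the intended constraint is on $\Omega\setminus\Omega_j$, so that the ``complementary region'' in your argument is the relatively compact set $\Omega_j$. Your argument only makes sense with this corrected reading, which you implicitly use.
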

\begin{proof}
    This is a particular case of a result established in \cite{NVT19}, where the author adapted arguments used in the Monge-Amp\`ere scenario. Below, we present an alternative proof employing the envelope technique.

    Initially, we assume that $u \in \mathcal{F}_m$. We fix $\phi \in \Eo$ and define $u_t = \max(u, t\phi)$ for each $t > 0$. According to \cite[Theorem 3.22]{Lu15}, we have $\int_{\Omega} \HH(u_t) = 0$. Since $u_t \in \Eo$, the comparison principle implies $u_t = 0$, and consequently, $u = 0$.

    To address the general case, we consider a fundamental sequence $[\Omega_j]$ of $\Omega$ and define $u^j := \PP({\bf 1}_{K_j} u)$, where $K_j = \Omega \setminus \Omega_j$. By the definition of $\Ker$, $u^j \nearrow 0$. The function $\PP(u - u^j)$ is in $\mathcal{F}_m$ because $\PP(u - u^j) \geq \PP({\bf 1}_{\Omega_j}u) \in \mathcal{F}_m$, and by Lemma~\ref{prop: P(u-v) is in Ea}, we find $\HH(\PP(u - u^j)) = 0$. It therefore follows that $\PP(u - u_j) = 0$, leading to $u \geq u^j$. As this holds for all $j$ and $u^j \nearrow 0$, we conclude that $u = 0$.
\end{proof}

The most cherished implication of our rooftop techniques so far is the following general comparison principle.

\begin{theorem}\label{thm: uniqueness in N}
Let $H \in \mathcal{E}_m$, and let $u, v \in \mathcal{N}_m(H)$ with $u \preceq v$. If $\operatorname{H}_m(u) \leq \operatorname{H}_m(v)$, then $u \geq v$. In particular, if $\operatorname{H}_m(u) = \operatorname{H}_m(v)$, then $u = v$.
\end{theorem}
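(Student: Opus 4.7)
The plan is to consider the envelope $w := \PP(u-v, 0) = \PP(\min(u-v,0))$, show it is identically zero, and then read off $u \geq v$. The first task is to place $w$ in $\Ker$. Since $u, v \in \Ker(H)$, there exists $\varphi \in \Ker$ with $\varphi + H \leq u$, while $v \leq H$; subtracting, $u - v \geq \varphi$, and since $\varphi \leq 0$ also $\min(u-v, 0) \geq \varphi$, hence $w \geq \varphi$. Thus $\varphi \leq w \leq 0$ with $\varphi \in \Ker \subset \E$; the locality of the class $\E$ then places $w \in \E$, and $\tilde\varphi = 0$ together with $w \leq 0$ forces $\tilde w = 0$, so $w \in \Ker$.

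Next I would invoke Lemma~\ref{prop: P(u-v) is in Ea}(2): the hypotheses $u, v \in \E$, $u \preceq v$, and $\HH(u) \leq \HH(v)$ are exactly its input, so $\HH(w) = 0$. Combining $w \in \Ker$ with $\HH(w) = 0$, Proposition~\ref{prop: uniqueness MA zero} forces $w \equiv 0$. By the definition of the envelope this means $\min(u-v,0) \geq 0$ quasi-everywhere, i.e.\ $u \geq v$ outside an $m$-polar set; since two $m$-subharmonic functions coinciding quasi-everywhere coincide everywhere (applied to $u$ and $\max(u,v)$), we conclude $u \geq v$ on all of $\Omega$. The ``in particular'' statement then follows by symmetry: once $u \geq v$ is known, the relation $v \preceq u$ is automatic (take $C_K = 0$ in the definition), so the theorem applied with the roles of $u$ and $v$ swapped, using $\HH(v) \leq \HH(u)$, yields $v \geq u$ and hence $u = v$.

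The substantive analytic work in this argument is done by Lemma~\ref{prop: P(u-v) is in Ea}(2), which converts the measure inequality $\HH(u) \leq \HH(v)$ under the singularity domination $u \preceq v$ into the vanishing $\HH(w)=0$, together with Proposition~\ref{prop: uniqueness MA zero}. The main delicate point in the sketch is the first step, verifying that the sandwich $\varphi \leq w \leq 0$ really puts $w$ into $\Ker$; this is technical but standard given the locality of $\E$ and the fact that $\varphi \in \Ker$. Everything else amounts to envelope bookkeeping and the symmetry observation for the equality case.
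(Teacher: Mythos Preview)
Your proof is correct and follows essentially the same approach as the paper's own proof: both establish $w = \PP(u-v,0) \in \Ker$ via the sandwich $\varphi \leq w \leq 0$, invoke Lemma~\ref{prop: P(u-v) is in Ea}(2) to obtain $\HH(w)=0$, apply Proposition~\ref{prop: uniqueness MA zero} to conclude $w=0$, and deduce the equality case by symmetry. Your write-up is slightly more explicit about why the sandwich places $w$ in $\Ker$, but the argument is the same.
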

\begin{proof} Directly from the definition there exists $\varphi \in \mathcal{N}_{m}$ such that $\varphi + H \leq u \leq H$. Then we have $u - v \geq \varphi$ which implies  $\PP(u - v, 0) \in \mathcal{N}_m$. From Lemma~\ref{prop: P(u-v) is in Ea} we get that $\HH(\PP(u-v,0)) = 0$. It follows from Proposition \ref{prop: uniqueness MA zero} that $\PP(u-v,0) = 0$, and therefore $u \geq v$.

The second statement directly follows from the first. If $\HH(u) = \HH(v)$, then $u \geq v$, implying $u \simeq v$. Changing the roles of $u$ and $v$ one gets $v \geq u$, thus concluding $u = v$.
\end{proof}

Theorem~\ref{thm: uniqueness in N} was previously established under the condition that
\[
\int_{\Omega} (-w) \HH(u) < +\infty
\]
for some $w \in \mathcal{E}_m^0$ with $w < 0$; see \cite{NVT19}. However, as demonstrated in \cite[Example 5.3]{Ceg08}, there exists a function $u \in \mathcal{N}_n \cap L^{\infty}$ such that
\[
\int_{\Omega} (-w) \operatorname{H}_{n}(u) = +\infty
\]
for all $w \in \mathcal{SH}_n^-$ with $w < 0$.

\subsection{Decomposition of $m$-subharmonic functions}

Thanks to Theorem~\ref{thm: uniqueness in N}, we are now prepared to present a proof of Theorem~A, as highlighted in the introduction.

\bigskip

\noindent {\bf Theorem~A}. \emph{
For any $u \in \Ker$, there exist unique functions $u_r, u_s \in \Ker$ satisfying the following conditions:
    \begin{enumerate}
        \item $u \leq u_r$, $u \leq u_s$;
        \item $\HH(u_r) = \mu_r(u) = {\bf 1}_{\{u > -\infty\}}\HH(u)$;
        \item $\HH(u_s) = \mu_s(u) = {\bf 1}_{\{u = -\infty\}}\HH(u)$.
    \end{enumerate}
Moreover, $u_r + u_s \leq u$.}
\begin{proof}
It follows from  Theorem 6.3 (2) in~\cite{NVT19} that there exists $u_r, u_s\in \E$ satisfying the three conditions in the theorem. By Lemma~\ref{prop: P(u-v) is in Ea} we get that $\PP(u-u_s,0)\in \mathcal E_m^a$ and
\[
\HH(\PP(u-u_s,0))\leq \HH(u_r).
\]
By~\cite[Corollary 5.8]{NVT19} we obtain the uniqueness of $u_r$ and we also have $u_r \leq \PP(u-u_s,0)$, hence $u_r+u_s\leq u$.

Now we prove the uniqueness of $u_s$.
Assume now that $v\in \Ker$ is such that $u\leq v$ and $\HH(v) = \HH(u_s)={\bf 1}_{\{u=-\infty\}}\HH(u)$. Then $w=\PP(u_s,v)\in \Ker$ and, by~\cite{HVP17}, since $u\leq w\leq \min(u_s,v)$,
we get
\[
{\bf 1}_{\{w=-\infty\}}\HH(w) = {\bf 1}_{\{v=-\infty\}} \HH(v) = {\bf 1}_{\{u_s=-\infty\}} \HH(u_s).
\]
Therefore we obtain $\HH(w)=\HH(v)=\HH(u_s)$ because these measures are supported by $m$-polar sets.
Theorem \ref{thm: uniqueness in N} then ensures that $v=w=u_s$. This ends the proof.
\end{proof}

\subsection{Rooftop equality} \label{sect: rofftop equality}

\begin{definition}\label{def: residual function}
For $u,v \in \mathcal{SH}_m^-(\Omega)$, the \emph{asymptotic rooftop envelope} $\PP[u](v)$ is defined as follows
\[
\PP[u](v) = \left( \lim_{C \to +\infty} \PP(u + C, v) \right)^*.
\]
In the case where $v = 0$, we denote $g_u = \PP[u](0)$ and call it the \emph{Green-Poisson residual function} of $u$, or simply the \emph{residual function} of $u$.
\end{definition}

The condition $g_u = 0$ means that the $m$-subharmonic function $u$ lacks strong singularities within the domain $\Omega$ and on its boundary $\partial\Omega$. We say that the \emph{rooftop equality} holds for $m$-subharmonic functions $u, v$, if the following holds:
\begin{equation}\label{eq:RTi}
\PP[u](v) = \PP(g_u,v).
\end{equation}

The main result, Theorem~B, of this section is to prove that the rooftop equality holds  for all $u\in \Ker(H_1)$, $v\in \Ker(H_2)$ if it holds for $H_1,H_2$. In particular, the rooftop equality holds in $\Ker$. We start with following result.

\begin{theorem}\label{thm: MA measure of asymptotic envelope}
	Let $u\in \E$. Then
 \begin{enumerate}
     \item $\mu_r(g_u) =0$;

     \item $\HH(\PP(u-g_u))\leq \mu_r(u)$, so $\PP(u-g_u)\in \mathcal E_m^a$;
     \item $\HH(g_u) = \mu_s(u)$.
 \end{enumerate}
 \end{theorem}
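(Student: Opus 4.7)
The plan is to approximate $g_u$ by the increasing family $w_C := \PP(u+C, 0)$ as $C \to +\infty$, and to analyze $\HH(w_C)$ by separating its regular and singular parts. Observe that $u \leq w_C \leq \min(u+C, 0) \leq 0$, so $0 \leq w_C - u \leq C$; in particular $w_C$ and $u$ share identical singularities ($w_C \simeq u$), while $w_C \nearrow g_u$ quasi-everywhere by the definition of the asymptotic rooftop envelope.

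For part (1), I apply Theorem~\ref{cor: NP of rooftop} to $w_C = \PP(u+C, 0)$; since $\mu_r(0) = 0$, this yields
\[
\mu_r(w_C) \leq {\bf 1}_{\{w_C = u+C\}}\, \mu_r(u).
\]
On the contact set $\{w_C = u+C\}$ the inequality $w_C \leq 0$ forces $u \leq -C$; hence $\mu_r(w_C) \leq {\bf 1}_{\{u \leq -C\}} \mu_r(u)$. As $C \to +\infty$ the right-hand side tends weakly to ${\bf 1}_{\{u=-\infty\}} \mu_r(u) = 0$, because $\mu_r(u)$ does not charge the $m$-polar set $\{u=-\infty\}$. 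Meanwhile Remark~\ref{thm: lsc of NP} applied to $w_C \nearrow g_u$ gives $\mu_r(w_C) \to \mu_r(g_u)$ weakly, and uniqueness of weak limits forces $\mu_r(g_u) = 0$.

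For part (3), from $w_C \simeq u$ and Lemma~\ref{lem: ACCP09 lemma 4.1} one obtains $\mu_s(w_C) = \mu_s(u)$, so $\HH(w_C) = \mu_r(w_C) + \mu_s(u)$. Monotone continuity of the Hessian operator along increasing $m$-subharmonic sequences yields $\HH(w_C) \to \HH(g_u)$ weakly; combined with (1) this gives $\HH(g_u) = \mu_s(u)$. For part (2), I invoke Lemma~\ref{prop: P(u-v) is in Ea}(1) with $v := g_u$: the inclusion $u \leq g_u$ gives $u \preceq g_u$, while (1) and (3) together give
\[
{\bf 1}_{\{u=-\infty\}} \HH(u) = \mu_s(u) = \HH(g_u) = {\bf 1}_{\{g_u=-\infty\}} \HH(g_u),
\]
verifying the hypothesis. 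The lemma then delivers $\HH(\PP(u-g_u)) \leq \mu_r(u)$ and $\PP(u-g_u) \in \mathcal{E}_m^a$.

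The main obstacle I anticipate is part (1), specifically the localization of $\mu_r(w_C)$ onto $\{w_C = u+C\} \subseteq \{u \leq -C\}$ via Theorem~\ref{cor: NP of rooftop}, combined with passing to the weak limit $\mu_r(w_C) \to \mu_r(g_u)$; here it is essential that the approximation is increasing, since Remark~\ref{thm: lsc of NP} shows the analogous conclusion fails for decreasing sequences. Once (1) is in place, (2) and (3) follow formally from the equivalence $w_C \simeq u$ and the structural lemmas already established in Section~\ref{sect: uniqueness}.
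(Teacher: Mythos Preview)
Your proof is correct. Part~(1) coincides with the paper's argument verbatim: both localize $\mu_r(w_C)$ onto $\{u\leq -C\}$ via Theorem~\ref{cor: NP of rooftop} and pass to the limit using Remark~\ref{thm: lsc of NP}.

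The organization of (2) and (3) differs from the paper's, and your route is slightly more economical. The paper cannot invoke Lemma~\ref{prop: P(u-v) is in Ea}(1) directly for (2), because its hypothesis ${\bf 1}_{\{u=-\infty\}}\HH(u)={\bf 1}_{\{g_u=-\infty\}}\HH(g_u)$ is precisely statement~(3), which is not yet available; so the paper re-runs the approximation argument of that lemma on the bounded functions $w_j=\PP(u-v_j)$ to obtain $\HH(w_j)\leq \mu_r(u)$, passes to the decreasing limit to get~(2), and only then deduces~(3) from the sandwich $g_u+\PP(u-g_u)\leq u\leq g_u$ via Lemma~\ref{lem: ACCP09 lemma 4.1}. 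You instead extract~(3) first, by noting that $\mu_s(w_C)=\mu_s(u)$ is \emph{constant} along the approximation (since $w_C\simeq u$) and letting $\HH(w_C)\to\HH(g_u)$; this makes the hypothesis of Lemma~\ref{prop: P(u-v) is in Ea}(1) available, and~(2) follows by a direct citation rather than a repeated argument. Both approaches rely on the same ingredients; yours trades the decreasing approximation $\PP(u-v_j)\searrow \PP(u-g_u)$ for the increasing one $w_C\nearrow g_u$, which lets Remark~\ref{thm: lsc of NP} do the work.
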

\begin{proof}
\emph{Part (1).} Define $v_j = \PP(u+j, 0)$ for $j > 0$. The sequence $\{v_j\}$ is increasing almost everywhere in $\Omega$, and converges to $g_u$ ($v_j \nearrow g_u$). Consequently, the regular part of the measure, $\mu_r(v_j)$, weakly converges to $\mu_r(g_u)$, as indicated by Remark~\ref{thm: lsc of NP}. From Corollary~\ref{cor: NP of rooftop}, we have
\[
\mu_r(v_j) \leq {\bf 1}_{\{v_j = u + j\}} \mu_r(u) \leq {\bf 1}_{\{u \leq -j\}} \mu_r(u) \to 0
\]
as $j \to +\infty$. This leads to $\mu_r(g_u) = 0$.

\emph{Part (2) and (3).} Define $w_j = \PP(u - v_j) = \PP(u - v_j, 0)$. Each $w_j$ belongs to $\mathcal{SH}_m^- \cap L^\infty(\Omega)$ and decreases to $\PP(u - g_u)$. Utilizing the argument from the proof of Lemma~\ref{prop: P(u-v) is in Ea}, we find that
\[
\HH(w_j) \leq \mu_r(u),
\]
and $\HH(\PP(u - g_u)) \leq \mu_r(u)$. Specifically, $\PP(u - g_u) \in \mathcal{E}_m^a$. Considering
\[
g_u + \PP(u - g_u) \leq u \leq g_u,
\]
and referencing Lemma~\ref{lem: ACCP09 lemma 4.1}, we conclude that $\mu_s(g_u) = \mu_s(u)$.
\end{proof}

\begin{remark}
It can be proved, in a similar manner as in~\cite{ACLR24}, that for any $u \in \mathcal{SH}_m^-$, $\mu_r(g_u) = 0$. See Theorem~\ref{thm: MA measure of asymptotic envelope} point (1).
\end{remark}

The following technical lemma in the case of plurisubharmonic functions was proved in~\cite{ACLR24}.

\begin{lemma}\label{lem: MA of w is zero}
   If $u,v \in \E$, then
    \[
    \HH(\PP(\PP[u](v)-\PP(g_u,v)))=0.
    \]
\end{lemma}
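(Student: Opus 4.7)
My plan is to set $\phi = \PP[u](v)$, $\psi = \PP(g_u, v)$, $w = \PP(\phi - \psi)$, and to apply Lemma~\ref{prop: P(u-v) is in Ea}(2) to the pair $\phi \preceq \psi$; its conclusion is precisely $\HH(w) = 0$, and its hypotheses are $\phi \preceq \psi$ (trivially true) and $\HH(\phi) \leq \HH(\psi)$. I first record the basic inequalities: $\phi \leq \PP(g_u, v) = \psi$ since $\phi \leq g_u$ and $\phi \leq v$; and $u + \psi \leq \phi$, because $u + \psi$ is $m$-subharmonic with $u + \psi \leq v$ and $u + \psi \leq u + C$ for every $C \geq 0$, hence a competitor in the definition of $\PP(u+C, v)$. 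These yield $u \leq \phi - \psi \leq 0$, whence $u \leq w \leq 0$ and $w \in \E$.

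For the regular-part comparison $\mu_r(\phi) \leq \mu_r(\psi)$, I use the approximation $\phi_C := \PP(u+C, v) \nearrow \phi$, Remark~\ref{thm: lsc of NP}, and Corollary~\ref{cor: NP of rooftop}:
\[
\mu_r(\phi_C) \leq \mathbf{1}_{\{\phi_C = u+C\}} \mu_r(u) + \mathbf{1}_{\{\phi_C = v\}} \mu_r(v).
\]
The first term is supported on $\{u \leq -C\}$ and its mass on any compact set tends to $0$, because $\mu_r(u)$ does not charge $\{u = -\infty\}$; the second is dominated by $\mathbf{1}_{\{\phi = v\}} \mu_r(v)$, since $\{\phi_C = v\}$ is increasing in $C$ and contained in $\{\phi = v\}$. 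In the limit, $\mu_r(\phi) \leq \mathbf{1}_{\{\phi = v\}} \mu_r(v)$. On $\{\phi = v\}$ the squeeze $\phi \leq \psi \leq v$ collapses to $\phi = \psi = v$, so the second part of Theorem~\ref{thm: maximum principle} applied to $\phi \leq \psi$ on $\{\phi = \psi\}$ gives $\mu_r(\phi) \leq \mu_r(\psi)$ there, hence globally.

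The hard part will be $\mu_s(\phi) \leq \mu_s(\psi)$. The maximum principle for singular parts (Theorem~\ref{thm: maximum principle SP}) applied to $\phi \leq \psi$ delivers only the reverse comparison $\mu_s(\psi) \leq \mu_s(\phi)$, so the desired direction has to be extracted by a more refined argument. My strategy is to show outright that $\mu_s(\phi) = \mu_s(\psi)$, by exploiting the identity $\HH(g_u) = \mu_s(u)$ from Theorem~\ref{thm: MA measure of asymptotic envelope}(3) together with the sandwiches $g_u + v \leq \psi \leq g_u$ and $u + \psi \leq \phi \leq \psi$; combined with the plurifine locality of $\HH$ and Lemma~\ref{lem: ACCP09 lemma 4.1}, these should localize both singular measures on $\{u = -\infty\} \cup \{v = -\infty\}$ to the same restriction of $\mu_s(u)$ (relative to the finiteness locus of $v$), giving $\mu_s(\phi) = \mu_s(\psi)$. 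Once $\HH(\phi) \leq \HH(\psi)$ is in hand, Lemma~\ref{prop: P(u-v) is in Ea}(2) directly yields $\HH(w) = 0$.
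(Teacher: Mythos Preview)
Your singular-part argument has a genuine gap. The sandwich $u+\psi \leq \phi \leq \psi$ only yields $|\phi-\psi| \leq -u$, and since $u\in\E$ need not lie in $\mathcal{E}_m^a$, Lemma~\ref{lem: ACCP09 lemma 4.1} does not apply. Your proposed workaround via ``plurifine locality'' is too vague to be a proof: nothing in what you wrote rules out $\mu_s(\phi)$ charging the set $\{u=-\infty\}\cap\{g_u>-\infty\}\cap\{v>-\infty\}$, on which $\psi>-\infty$ and hence $\mu_s(\psi)$ vanishes. You would need precisely that vanishing, and you have not argued it.

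The missing ingredient is the sharper lower bound
\[
\phi \;=\; \PP[u](v) \;\geq\; \PP(u,v) \;\geq\; \PP(u-g_u) + \PP(g_u,v) \;=\; \PP(u-g_u) + \psi,
\]
the middle inequality holding because $\PP(u-g_u)+\PP(g_u,v)$ is $m$-subharmonic and lies below $\min(u,v)$ quasi-everywhere. This gives $|\phi-\psi| \leq -\PP(u-g_u)$, and since $\PP(u-g_u)\in\mathcal{E}_m^a$ by Theorem~\ref{thm: MA measure of asymptotic envelope}(2), Lemma~\ref{lem: ACCP09 lemma 4.1} now delivers $\mu_s(\phi)=\mu_s(\psi)$ immediately. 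Equivalently, $w \geq \PP(u-g_u)$ forces $w\in\mathcal{E}_m^a$, after which your regular-part argument (which is correct) finishes the proof via the mechanism of Lemma~\ref{prop: P(u-v) is in Ea}(2). The paper itself defers the proof to its Monge--Amp\`ere companion \cite{ACLR24}, but note that exactly this inequality $\phi \geq \PP(u-g_u)+\psi$ appears in the paper's proof of Theorem~B, used for the same purpose of placing $\varphi=w$ in $\mathcal{E}_m^a$.
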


\bigskip

\noindent {\bf Theorem~B.}\emph{
Assume $H_1, H_2 \in \E$,  and
\[
\PP[H_1](H_2) = \PP(g_{H_1},H_2).
\]
Then $\PP[u](v) = \PP(g_u, v)$ for all $u \in \Ker(H_1)$ and $v \in \Ker(H_2)$.
}
\begin{proof}
Define the function
\[
\varphi = \PP\left(\PP[u](v) - \PP(g_u, v)\right),
\]
and observe that since $\PP[u](v) \leq \PP(g_u, v)$, it follows that $\varphi \leq 0$. We aim to demonstrate that $\varphi \in \mathcal{N}_m^a$.

Given the assumptions $u \in \Ker(H_1)$ and $v \in \Ker(H_2)$, we find that $\PP(u - H_1) \in \Ker$ and $\PP(v - H_2) \in \Ker$. Consequently, we have:
\begin{flalign*}
\PP(u + C, v) & \geq \PP(\PP(u - H_1) + H_1 + C, \PP(v - H_2) + H_2) \\
& \geq \PP(u - H_1) + \PP(v - H_2) + \PP(H_1 + C, H_2).
\end{flalign*}
If $C \nearrow +\infty$, this leads to:
\begin{multline*}
\PP[u](v) \geq \PP(u - H_1) + \PP(v - H_2) + \PP[H_1](H_2) \\
= \PP(u - H_1) + \PP(v - H_2) + \PP(g_{H_1}, H_2) \\
\geq \PP(u - H_1) + \PP(v - H_2) + \PP(g_u, v).
\end{multline*}

From the above inequality, we see that $\varphi \geq \PP(u - H_1) + \PP(v - H_2)$, so $\varphi \in \Ker$. We now prove that the Hessian measure of $\varphi$ does not put mass on $m$-polar sets. Noting that
\[
\PP[u](v) \geq \PP(u, v) \geq \PP(u - g_u) + \PP(g_u, v),
\]
we deduce that $\varphi \geq \PP(u - g_u)$. It follows from Theorem~\ref{thm: MA measure of asymptotic envelope} that $\PP(u - g_u) \in \mathcal{E}_m^a$, and thus $\varphi \in \mathcal{N}_m^a$. By Lemma~\ref{lem: MA of w is zero}, we have $\HH(\varphi) = 0$, and according to Theorem~\ref{thm: uniqueness in N}, $\varphi = 0$. This concludes the proof.
\end{proof}

Let us note that the equality $\PP[H_1](H_2) = \PP(g_{H_1}, H_2)$ in Theorem~B holds if there exists $w \in \E$ such that $H_2 + w \leq H_1$ and $g_w = 0$. We then observe the following:
\[
\PP[H_1](H_2) \geq \PP[H_2 + w](H_2) \geq H_2 + g_w = H_2  \geq \PP(g_{H_1}, H_2) \geq \PP[H_1](H_2).
\]
This confirms the rooftop equality under the specified conditions.

\section{Geodesic connectivity} \label{sect: geodesic connectivity}

Fix $\Omega$, an open set in $\mathbb{C}^n$, $n\geq 2$, and let $A = \{w \in \mathbb{C} : 1 < |w| < e\}$ denote the annulus in $\mathbb{C}$ with radii $1$ and $e$. Define $\pi : D = \Omega \times A \rightarrow \Omega$ to be the projection mapping $(z,w) \in \Omega \times A$ to $z$ in $\Omega$. Set $\theta = \pi^*(dd^c |z|^2)$, and $\omega=dd^c (|z|^2+|w|^2)$.

\subsection{$m$-subharmonic functions with respect to a semipositive form}
\begin{definition}
A $(1,1)$-form $\alpha$ is $(\theta, m+1)$-positive in $D$ if
\begin{equation*}
    \alpha^k \wedge \omega^{m+1-k}\wedge \theta^{n-m} \geq 0, \quad  \text{for all } 1\leq k\leq m+1,
\end{equation*}
pointwise in $D$.
\end{definition}
In particular, any semipositive $(1,1)$-form is $(\theta,m+1)$-positive. From the definition, it follows that any $(\theta,m+1)$-positive form $\alpha$ satisfies
\[
(\alpha+t\omega)^{m+1}\wedge \theta^{n-m} >0, \; t>0.
\]
This observation is useful in the following version of G{\aa}rding's inequality.
    \begin{lemma}\label{GC: Lem}
    A $(1,1)$-form $\beta$ is $(\theta,m+1)$-positive if and only if
    \begin{equation}
         \label{eq: hyperbolic pol}
          \beta \wedge \alpha_1 \wedge \ldots\wedge \alpha_m \wedge \theta^{n-m} \geq 0,
    \end{equation}
    for all $(\theta,m+1)$-positive $(1,1)$-forms $\alpha_1,\ldots,\alpha_m$.
\end{lemma}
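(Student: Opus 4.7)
The plan is to interpret this as a Gårding-type inequality arising from a hyperbolic polynomial. Pointwise on $D$, view $P(\alpha) := \alpha^{m+1} \wedge \theta^{n-m}$ (normalized by the volume form $\omega^{n+1}/(n+1)!$) as a degree-$(m+1)$ homogeneous polynomial on the real vector space of $(1,1)$-forms. The first step is to verify that $P$ is hyperbolic with respect to $\omega$, meaning that for every real $(1,1)$-form $\alpha$ the equation $P(\alpha + t\omega) = 0$ has only real roots in $t$. This reduces, via simultaneous Hermitian diagonalization of $\alpha$ against $\omega$ on the relevant $(m+1)$-dimensional subspace (with $\theta^{n-m}$ contracting away the remaining $z$-directions, its kernel lying along the $w$-direction), to the elementary fact that $\omega$ is positive definite; the argument parallels B\l ocki's treatment of the usual complex Hessian and extends directly to the mixed $(\theta, m+1)$-setting.

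Once hyperbolicity is established, the associated Gårding cone, namely the connected component of $\{P > 0\}$ containing $\omega$, is well defined, and its closure coincides with the cone of $(\theta, m+1)$-positive forms, since both are cut out by the same sign conditions $\alpha^k \wedge \omega^{m+1-k} \wedge \theta^{n-m} \geq 0$ for $k = 1, \ldots, m+1$. The \emph{only if} direction is then immediate from Gårding's classical polarization inequality: the full polarization of $P$ is non-negative on any $m+1$ elements of the closed Gårding cone, and up to a fixed positive constant it equals $\beta \wedge \alpha_1 \wedge \cdots \wedge \alpha_m \wedge \theta^{n-m}$.

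For the \emph{if} direction I would argue by contraposition, invoking the self-duality of the Gårding cone. After simultaneous Hermitian diagonalization of $\beta$ against $\omega$ in the relevant $(m+1)$-dimensional subspace, the $(\theta,m+1)$-positivity of $\beta$ becomes equivalent to non-negativity of every elementary symmetric polynomial $\sigma_k(\lambda_1,\ldots,\lambda_{m+1})$ in the eigenvalues of $\beta$. If some $\sigma_k(\lambda) < 0$, one constructs rank-one ``eigendirection'' test forms $\alpha_i$ (each manifestly $(\theta,m+1)$-positive, being positive semi-definite with a suitable $w$-component) so that the mixed wedge $\beta \wedge \alpha_1 \wedge \cdots \wedge \alpha_m \wedge \theta^{n-m}$ extracts precisely the offending $\sigma_k(\lambda)$, producing the required negative value and contradicting the hypothesis.

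The principal difficulty lies in the \emph{if} direction: carrying out the simultaneous diagonalization in the degenerate setting where $\theta$ has deficient rank, and then building rank-one test forms that isolate each $\sigma_k$ while remaining inside the closed Gårding cone, takes some care. Two alternative routes are available: one may invoke Gårding's abstract self-duality theorem directly, or one may proceed by induction on $m$, using perturbations $\beta + t\omega$ (which lie in the Gårding cone for $t$ large, by positivity of $\omega$) and passing to the appropriate boundary limit as $t$ decreases; in all cases the explicit extraction of each elementary symmetric function remains the subtle point.
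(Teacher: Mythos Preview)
Your overall framework---casting the problem in G\aa rding's theory of hyperbolic polynomials and reading off the ``only if'' direction from the polarization inequality---is exactly what the paper does. The gap is in your primary mechanism, the ``simultaneous Hermitian diagonalization on the relevant $(m+1)$-dimensional subspace'': no such fixed subspace exists. You can diagonalize $\beta$ (or $\alpha$) against $\omega$, but $\theta$ will not in general be diagonal in that basis, so $\theta^{n-m}$ does not ``contract away $n-m$ of the $z$-directions''; it is a sum over all $(n-m)$-subsets of the new coordinate directions, weighted by principal minors of $\theta$, and the quantity $\beta^k\wedge\omega^{m+1-k}\wedge\theta^{n-m}$ is not an elementary symmetric polynomial in the eigenvalues of $\beta$. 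Three Hermitian forms cannot be simultaneously diagonalized, and this obstruction undermines both your hyperbolicity argument and your rank-one test-form construction for the converse. (A nonnegative linear combination of real-rooted polynomials need not be real-rooted, so the post-diagonalization expression does not obviously yield hyperbolicity either.)

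The paper sidesteps this twice. For hyperbolicity it replaces $\theta$ by $\theta+\varepsilon\omega$, which is strictly positive and hence lies in the G\aa rding cone of the full determinant polynomial $Q(\alpha)=\alpha^{n+1}/dV$; G\aa rding's Theorem~4 then gives that $P_\varepsilon(\alpha)=\alpha^{m+1}\wedge(\theta+\varepsilon\omega)^{n-m}/dV$ is hyperbolic with respect to $\omega$, and one lets $\varepsilon\to 0$. For the ``if'' direction the paper runs precisely the $\beta+t\omega$ perturbation you list as your last alternative: set $t=\inf\{s>0:\beta+s\omega\text{ is }(\theta,m+1)\text{-positive}\}>0$, so that some $(\beta+t\omega)^k\wedge\omega^{m+1-k}\wedge\theta^{n-m}=0$; expand one factor of $(\beta+t\omega)$, apply the hypothesis \eqref{eq: hyperbolic pol} to the $\beta$-term (the remaining slots being $\beta+t\omega$ or $\omega$, both $(\theta,m+1)$-positive), and deduce that both summands vanish, hence $(\beta+t\omega)^{k-1}\wedge\omega^{m+2-k}\wedge\theta^{n-m}=0$. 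Iterate down to $k=0$ to reach $\omega^{m+1}\wedge\theta^{n-m}=0$, a contradiction. No diagonalization and no explicit test forms are needed; your final alternative was the right one, and the missing piece is this descending induction.
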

\begin{proof}
 We apply Gårding's theory of hyperbolic polynomials, with a detailed discussion available in \cite{Gar59}. Due to the form $\theta$ not being strictly positive, an approximation argument is necessary. Given that the inequality is pointwise, we can assume all $(1,1)$-forms involved have constant coefficients. These are identifiable with Hermitian matrices in $\mathbb{C}^{n+1}$, which correspond to $\mathbb{R}^{(n+1)^2}$.

 A homogeneous polynomial $P$ of degree $m$ is hyperbolic with respect to a $(1,1)$-form $\beta$ if for any $(1,1)$-form $\alpha$, the equation $P(\alpha+t\beta)=0$ has $m$ real solutions. If $dV$ denotes the standard Euclidean volume forme, then the polynomial $Q(\alpha)= \alpha^{n+1}/dV$ is hyperbolic with respect to any strictly positive $(1,1)$-form, and the cone $C(Q)$ consists of those forms (see Example 4 and page 960 in \cite{Gar59}). Let $M$ be the completely polarized form of $Q$, defined as
 \[
 M(\alpha_1,\ldots,\alpha_{n+1}) = \frac{\alpha_1\wedge\ldots \wedge \alpha_{n+1}}{dV}.
 \]
 For each $\varepsilon>0$, by \cite[Theorem 4]{Gar59}, the polynomial $P_{\varepsilon}(\alpha):= M(\alpha,\ldots,\alpha, \theta+\varepsilon \omega,\ldots,\theta+\varepsilon\omega)$ (with $\alpha$ repeated $m+1$ times), is hyperbolic with respect to any strictly positive $(1,1)$-form, particularly $\omega$. Consequently, the equation $P_{\varepsilon}(\alpha + t \omega) = 0$ has $m+1$ real solutions. As $\varepsilon \to 0^+$, it follows that $P(\alpha + t \omega) = 0$ also has $m+1$ real solutions, establishing that $P(\alpha)=\alpha^{m+1}\wedge \theta^{n-m}/dV$ is hyperbolic with respect to $\omega$.

Recall from \cite{Gar59} that the cone $C(P, \omega)$ comprises all $\beta$ such that $P(\beta + t \omega) > 0$ for all $t \geq 0$ (since $P(\omega)>0$). Consequently, any $(\theta, m+1)$-positive form resides within the closure of $C(P, \omega)$. Hence, by \cite[Theorem 5]{Gar59}, $M(\beta, \alpha_1,\ldots, \alpha_m) \geq 0$ if $\beta, \alpha_1,\ldots, \alpha_m$ are $(\theta, m+1)$-positive, where $M$ is the completely polarized form of $P$, expressed by the wedge product:
 \begin{equation*}
M(\beta, \alpha_1, \ldots, \alpha_m) = \frac{\beta \wedge \alpha_1 \wedge \cdots \wedge \alpha_m \wedge \theta^{n-m}}{dV}.
\end{equation*}

Suppose $\beta$ satisfies \eqref{eq: hyperbolic pol} for all $(\theta, m+1)$-positive forms $\alpha_1,\ldots, \alpha_m$, yet by contradiction, $\beta$ is not $(\theta, m+1)$-positive. Let $t$ be the infimum of $s>0$ such that $\beta + s\omega$ is $(\theta, m+1)$-positive. Then $t>0$ 
and there exists $1\leq k\leq m+1$ such that
\[
(\beta + t\omega)^k \wedge \omega^{m+1-k}\wedge \theta^{n-m}=0.
\]
For, if they are all positive, then for some $t_1>0$ slightly smaller than $t$, the form $\beta+t_1\omega$ is still $(\theta,m+1)$-positive, contradicting the definition of $t$. Expanding this equality as
\[
\beta \wedge (\beta+t\omega)^{k-1}\wedge \omega^{m+1-k} \wedge \theta^{n-m} + t\omega \wedge (\beta+t\omega)^{k-1}\wedge \omega^{m+1-k} \wedge \theta^{n-m} =0,
\]
and using \eqref{eq: hyperbolic pol} we infer that the two terms above, which are non-negative, must be zero. In particular, looking at the second term, and using $t>0$, we arrive at $(\beta+t\omega)^{k-1}\wedge \omega^{m+2-k}\wedge \theta^{n-m}=0$. Repeating this argument, we arrive at
$P(\omega) = 0$, which is a contradiction.
\end{proof}

\begin{corollary}
     Let $\alpha$ be a $(1,1)$-form in $\Omega$. Then the $(1,1)$-form $\pi^*\alpha$ is $(\theta, m+1)$-positive in $D$ if and only if $\alpha$ is $m$-positive in $\Omega$.
\end{corollary}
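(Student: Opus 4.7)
The plan is to prove the equivalence by an essentially direct computation, exploiting two facts: on $D=\Omega\times A$ we can split $\omega=\theta+\eta$ with $\eta:=dd^c|w|^2$, and this $\eta$ satisfies $\eta^2=0$ because $w$ is a single complex variable. Combined with the bidegree obstruction that any $(n+1,n+1)$-form in the $z$-variables alone must vanish on $\Omega\subset\mathbb{C}^n$, this collapses the sum of positivity conditions for $\pi^*\alpha$ to precisely the sum of positivity conditions defining $\Gamma_m$.

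Concretely, for each $1\le k\le m$, I would expand binomially and use $\eta^2=0$ to get
\[
\omega^{m+1-k}=\theta^{m+1-k}+(m+1-k)\,\theta^{m-k}\wedge\eta.
\]
Wedging with $(\pi^*\alpha)^k\wedge\theta^{n-m}$, the first summand produces $(\pi^*\alpha)^k\wedge\theta^{n-k+1}$, a form of bidegree $(n+1,n+1)$ in the $z$-variables, which vanishes identically. The second summand yields
\[
(m+1-k)\,\pi^*\!\bigl(\alpha^k\wedge(dd^c|z|^2)^{n-k}\bigr)\wedge\eta.
\]
Because $\eta$ is a nonnegative $(1,1)$-form in the $w$-direction and $m+1-k>0$, this top-degree form on $D$ is nonnegative if and only if $\alpha^k\wedge(dd^c|z|^2)^{n-k}\ge 0$ on $\Omega$. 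The remaining case $k=m+1$ is automatic: $(\pi^*\alpha)^{m+1}\wedge\theta^{n-m}$ has $z$-bidegree $(n+1,n+1)$ and hence is zero, so the corresponding positivity condition is trivially satisfied.

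Collecting the equivalences over $1\le k\le m+1$ shows that $\pi^*\alpha$ is $(\theta,m+1)$-positive on $D$ if and only if $\alpha\in\Gamma_m$ on $\Omega$. I do not anticipate any serious obstacle; the only point requiring care is keeping track of $z$-bidegrees to eliminate the non-$\eta$ contributions, and verifying that the coefficient $m+1-k$ is strictly positive throughout the relevant range so that the equivalence goes through in both directions.
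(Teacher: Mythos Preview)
Your argument is correct. The binomial computation you carry out is exactly the one the paper performs: expanding $\omega^{m+1-k}=(\theta+\eta)^{m+1-k}$ with $\eta^2=0$ and killing the pure-$z$ top-degree term yields
\[
(\pi^*\alpha)^k\wedge\omega^{m+1-k}\wedge\theta^{n-m}=(m+1-k)\,\pi^*\bigl(\alpha^k\wedge(dd^c|z|^2)^{n-k}\bigr)\wedge\eta,
\]
and for $k=m+1$ the left-hand side vanishes identically.

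The difference lies in how the ``only if'' direction is handled. The paper extracts only the forward implication ($\alpha$ $m$-positive $\Rightarrow$ $\pi^*\alpha$ $(\theta,m+1)$-positive) from this identity, and then appeals to the G{\aa}rding-type characterization in Lemma~\ref{GC: Lem}: if $\pi^*\alpha$ is $(\theta,m+1)$-positive, one tests against $\alpha_1=dd^c|w|^2$ and arbitrary $m$-positive $\alpha_2,\ldots,\alpha_m$ in $\Omega$ to recover $m$-positivity of $\alpha$. You instead observe that the identity above is already an equivalence for each $k\le m$, since $m+1-k>0$ and wedging a top-degree $z$-form with the positive $(1,1)$-form $\eta$ preserves sign in both directions. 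Your route is more self-contained here, as it avoids invoking G{\aa}rding's theory for this particular corollary; the paper's route, on the other hand, illustrates how Lemma~\ref{GC: Lem} is meant to be used and keeps that lemma visibly in play.
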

\begin{proof}
   Let $\alpha= \sum_{j,k=1}^n \alpha_{j,k} dz_j \wedge d\bar{z}_k$ be a $(1,1)$-from in $\Omega$. Observe that
   \begin{flalign*}
        (\pi^*\alpha)^k &\wedge \omega^{m+1-k}\wedge \theta^{n-m}= (\pi^*\alpha)^k \wedge (\theta+dd^c |w|^2)^{m+1-k}\wedge \theta^{n-m}\\
        &= \sum_{l=0}^{m+1-k} \binom{m+1-k}{l}(\pi^* \alpha)^k \wedge \theta^{n+l-m}  \wedge (dd^c |w|^2)^{m+1-k-l}\\
        &= \binom{m+1-k}{m-k}(\pi^* \alpha)^k \wedge \theta^{n-k}  \wedge (dd^c |w|^2).
   \end{flalign*}
   Here we use that the term corresponding to $m+1-k-l =0$ is zero because an $(n+1,n+1)$-form involving only $dz_j$ terms is zero, while  the term corresponding to $m+1-k-l \geq 2$ is zero because $(dd^c |w|^2)^2=0$. Thus, if $\alpha$ is $m$-positive in $\Omega$, then from the expansion above, we see that it is non-negative for $k\leq m$, while $(\pi^*\alpha)^{m+1}\wedge \theta^{n-m}$ vanishes.

   If $\pi^*\alpha$ is $(\theta,m+1)$-positive then, in Lemma \ref{GC: Lem}, taking $\alpha_1=dd^c |w|^2$ and $\alpha_2,\ldots,\alpha_m$ $m$-positive forms in $\Omega$, we see that $\alpha$ is $m$-positive.
\end{proof}

\begin{definition}
   A function $u$ is $(\theta, m+1)$-subharmonic in $D$ if it is subharmonic in $D$ and
\[
dd^c u \wedge \alpha_1 \wedge \ldots \wedge \alpha_m \wedge \theta^{n-m} \geq 0
\]
in the weak sense of currents, for all $(\theta, m+1)$-positive $(1,1)$-forms $\alpha_1, \ldots, \alpha_m$.
\end{definition}

We impose subharmonicity here to ensure that two $(\theta,m+1)$-subharmonic functions coinciding almost everywhere are equal everywhere.

\begin{remark}
Utilizing Lemma~\ref{GC: Lem}, we see that a $C^2$-function $u$ is $(\theta, m+1)$-subharmonic if and only if the $(1,1)$-form $dd^c u$ is $(\theta, m+1)$-positive.
\end{remark}

\begin{remark}
Taking $\alpha_1 = dd^c |w|^2$ and $\alpha_2, \ldots, \alpha_m$ as $m$-positive $(1,1)$-forms in $\Omega$ in the above definition, we observe that the slice $z \mapsto u(z, w)$ is $m$-subharmonic in $\Omega$ for each fixed $w$. Similarly, by taking $\alpha_j = \theta$ for all $j$, we deduce that $w \mapsto u(z, w)$ is subharmonic in $A$ for any fixed $z$. In particular, if $u$ is $S^1$-invariant in $w$, then the function $t\mapsto u(z,e^t)$ is convex in $[0,1]$.
\end{remark}

Below, we summarize the basic properties of $(\theta, m+1)$-subharmonic functions, the proofs of which are straightforward adaptations from the case when $m = n$.

\begin{proposition}{\; }
    \begin{enumerate}
        \item If $u,v$ are $(\theta,m+1)$-subharmonic in $\Omega$ then $\max(u,v)$, $au+bv$ are $(\theta,m+1)$-subharmonic in $\Omega$ for any $a,b>0$.
        \item  If $u$ is $(\theta,m+1)$-subharmonic then the standard regularization $u\star \chi_{\varepsilon}$ are also $(\theta,m+1)$-subharmonic and $u\star \chi_{\varepsilon}\searrow u$.
        \item If $(u_j)_{j\in J}$ is a family of $(\theta,m+1)$-subharmonic functions which are locally uniformly bounded from above then $(\sup_{j\in J}u_j)^*$ is also $(\theta,m+1)$-subharmonic. Here $*$ stands for the upper semicontinuous regularization.
        \item If $m=n$ then $(\theta,n+1)$-subharmonic means plurisubharmonic.
    \end{enumerate}
\end{proposition}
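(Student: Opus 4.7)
The plan is to follow the classical pluripotential arguments (the case $m=n$) and adapt them via the flexibility granted by the Gårding-type inequality established in Lemma~\ref{GC: Lem}. I would prove the items in the order (2), (1), (3), (4), so that the regularization in (2) becomes a convenient tool for the other parts. For (2), take a radial mollifier $\chi_\varepsilon$ on $\mathbb{C}^{n+1}$. Since $dd^c$ commutes with convolution and $\chi_\varepsilon\geq 0$, for any fixed $(\theta,m+1)$-positive constant-coefficient forms $\alpha_1,\ldots,\alpha_m$ one has
\[
dd^c(u\star\chi_\varepsilon)\wedge\alpha_1\wedge\cdots\wedge\alpha_m\wedge\theta^{n-m}=\bigl(dd^cu\wedge\alpha_1\wedge\cdots\wedge\alpha_m\wedge\theta^{n-m}\bigr)\star\chi_\varepsilon\geq 0,
\]
so $u\star\chi_\varepsilon$ is $(\theta,m+1)$-subharmonic in $D_\varepsilon$, and the monotone decrease $u\star\chi_\varepsilon\searrow u$ comes from the classical sub-mean-value property for subharmonic functions on Euclidean balls of $\mathbb{C}^{n+1}$.

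For (1), the linear combination $au+bv$ is $(\theta,m+1)$-subharmonic by linearity of $dd^c$ together with $a,b>0$. For $\max(u,v)$, subharmonicity of the maximum of two subharmonic functions is classical; using (2) one reduces the positivity condition to smooth $u,v$, and approximates $t\mapsto\max(t,0)$ by smooth convex increasing functions $\chi_\delta$ with $\chi_\delta',\chi_\delta''\geq 0$. The chain rule then gives
\[
dd^c\bigl(v+\chi_\delta(u-v)\bigr)=\bigl(1-\chi_\delta'(u-v)\bigr)dd^cv+\chi_\delta'(u-v)\,dd^cu+\chi_\delta''(u-v)\,d(u-v)\wedge d^c(u-v),
\]
a nonnegative combination of two $(\theta,m+1)$-positive forms and one semipositive form; by Lemma~\ref{GC: Lem} the result is $(\theta,m+1)$-positive, and letting $\delta\to 0^+$ yields the claim for $\max(u,v)$ in the sense of currents. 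For (3), subharmonicity of $(\sup_{j\in J}u_j)^*$ is classical; to verify positivity, Choquet's lemma supplies a countable subfamily $(u_{j_k})_k$ with the same upper regularization, and applying (1) to partial maxima produces an increasing sequence $v_k\nearrow v$ with $v^*=(\sup_j u_j)^*$. Monotone convergence of subharmonic functions implies $L^1_{\mathrm{loc}}$ convergence, hence current convergence of $dd^cv_k$ to $dd^c v$, so wedging with the constant-coefficient positive test forms preserves the nonnegativity. Finally, (4) follows from $\theta^{n-m}=1$ when $m=n$: the $(\theta,n+1)$-positivity condition then becomes $\alpha^k\wedge\omega^{n+1-k}\geq 0$ for $1\leq k\leq n+1$, which characterizes semipositive $(1,1)$-forms on $\mathbb{C}^{n+1}$; the defining inequality for $u$ thus reduces to $dd^cu\wedge\alpha_1\wedge\cdots\wedge\alpha_n\geq 0$ for all semipositive $\alpha_j$, equivalent by the classical Gårding inequality to $dd^cu\geq 0$, i.e., plurisubharmonicity.

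The main obstacle is item (1) for $\max$: one must ensure that $(\theta,m+1)$-positivity is not destroyed on the contact set $\{u=v\}$. The smooth-approximation device combined with the chain-rule identity above, whose extra positive term $\chi_\delta''(u-v)\,d(u-v)\wedge d^c(u-v)$ plays the crucial role, is the key technical tool; everything else becomes routine once (1) and (2) are available.
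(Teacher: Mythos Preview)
Your proposal is correct and is precisely the ``straightforward adaptation from the case $m=n$'' that the paper invokes in lieu of a proof; the paper gives no argument beyond that sentence, so there is nothing to compare. One small point worth making explicit in part (1): your smooth convex approximants $\chi_\delta$ of $t\mapsto\max(t,0)$ must satisfy $0\leq\chi_\delta'\leq 1$ (not just $\chi_\delta'\geq 0$) so that the coefficient $1-\chi_\delta'(u-v)$ in front of $dd^c v$ is nonnegative, but this is standard and implicit in the approximation.
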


\subsection{$m$-subharmonic geodesics}
Since the influential publication of Mabuchi's seminal work on constant scalar curvature Kähler metrics \cite{Mab87}, which introduced the concept of plurisubharmonic geodesics, there has been vigorous activity in the mathematical community. From our viewpoint, it is critical to represent geodesics as the upper envelopes of sub-geodesics. For $n=m$, this approach has been adopted in a local context by Berman-Berndtsson \cite{BB22}, Abja \cite{Abj19}, Abja-Dinew \cite{AD21,AD24}, and Rashkovskii \cite{Ras17}. In the subsequent discussion, we introduce an analogous concept of sub-geodesics for $m$-subharmonic functions.

\begin{definition}
  A curve $u_t \in \MSH{\Omega}$, for $t \in (0,1)$, is  an \emph{$m$-subharmonic sub-geodesic} if the function $U(z,w) = u_{\log |w|}(z)$ is $(\theta, m+1)$-subharmonic in $D$.
\end{definition}
 Any function $u \in \MSH{\Omega}$, when viewed as a function in $D$, is $(\theta, m+1)$-subharmonic. The mappings $(z,t) \mapsto \pm t$ are $(\theta, m+1)$-subharmonic because their associated functions $U(z,w) = \pm \log |w|$ are plurisubharmonic in $D$, and thus also $(\theta, m+1)$-subharmonic. In particular, if $u_0, u_1 \in \MSH{\Omega}$ and $C$ is a constant, then the function
\[
v_t = \max(u_0 - Ct, u_1 + Ct - C)
\]
is an $m$-subharmonic sub-geodesic. If $C \geq \sup_{\Omega} |u_0 - u_1|$, then $v_t$  connects $u_0$ to $u_1$, in the sense that
\[
\lim_{t \to 0} v_t = u_0 \; \text{and} \; \lim_{t \to 1} v_t = u_1,
\]
pointwise in $\Omega$. We define $\mathcal{S}(u_0, u_1)$ as the set of all $m$-subharmonic sub-geodesics $v_t$ that lie below $u_0$ and $u_1$ in the sense that
\[
\limsup_{t \to 0} u_t(z) \leq u_0(z), \quad \limsup_{t \to 1} u_t(z) \leq u_1(z),
\]
for almost every $z \in \Omega$. The non-emptiness of the set $\mathcal{S}(u_0, u_1)$ is ensured, as $u_0 + u_1$ itself is a member. Each $m$-subharmonic sub-geodesic $v_t$ is subharmonic and $S^1$-invariant in $w = e^t$, thus the map $t \mapsto v_t$ is convex. If $v_t \in \mathcal{S}(u_0, u_1)$, then it satisfies $v_t \leq (1-t)u_0 + t u_1$.
\begin{definition}
Given $u_0, u_1 \in \MSH{\Omega}$, we define the \emph{$m$-subharmonic geodesic between $u_0$ and $u_1$} as
\[
u_t(z) = \sup \{v_t(z) : v \in \mathcal{S}(u_0, u_1)\}.
\]
\end{definition}
Since $u_t \leq (1-t)u_0 + t u_1$, the upper semicontinuous regularization of $(z,t) \mapsto u_t(z)$ also belongs to $\mathcal{S}(u_0, u_1)$, confirming that $u_t$ is indeed an element of $\mathcal{S}(u_0, u_1)$. The convexity of $u_t$ further implies that for bounded functions $u_0, u_1 \in \MSH{\Omega}$, the $m$-subharmonic geodesic $u_t$ satisfies
\[
\lim_{t \to 0} u_t(z) = u_0(z), \quad \text{and} \quad \lim_{t \to 1} u_t(z) = u_1(z),
\]
for all $z \in \Omega$. For unbounded functions $u_0, u_1$, it is preferable to permit convergence almost everywhere. We say that \emph{$u_0$ can be connected to $u_1$ by an $m$-subharmonic geodesic} if these limits hold almost everywhere in $\Omega$.

\begin{remark}\label{rem:approximation}
Given negative functions $u_0, u_1 \in \MSH{\Omega}$, we consider their approximants $u_0^j, u_1^j \in \MSH{\Omega} \cap L^{\infty}$, which decrease to $u_0$ and $u_1$ respectively. Let $u_t^j$ denote the $m$-subharmonic geodesics connecting $u_0^j$ to $u_1^j$. As $j$ increases, $u_t^j$ decreases and converges to $u_t = \lim_{j\to+\infty} u_t^j$. This limit $u_t$ represents the largest $m$-subharmonic sub-geodesic lying below $u_0$ and $u_1$; notably, it is independent of the specific approximants $u_0^j, u_1^j$.  Indeed, if $v_t$ denotes the $m$-subharmonic geodesic between $u_0$ and $u_1$, then $v_t \leq u_t^j$ for all $j$, and consequently, $v_t \leq u_t$. Conversely, since $u_t^j \leq (1-t)u_0^j + t u_1^j$ for all $j$ due to convexity, taking the limit as $j \to \infty$ implies that $u_t$ belongs to $\mathcal{S}(u_0, u_1)$ and thus satisfies $u_t \leq v_t$.
\end{remark}

\subsection{The Kiselman minimum principle}{\; }

\bigskip

\noindent {\bf Theorem~D.} \emph{Let $(u_t)_{t\in (0,1)}$ be an $m$-subharmonic sub-geodesic in $\Omega$. Then the function $\inf_{t\in (0,1)} u_t$ is $m$-subharmonic in $\Omega$.}

\bigskip

Just as in the scenario where $m = n$, the infimum of a family of $m$-subharmonic functions is often not $m$-subharmonic. The importance of the sub-geodesic assumption, particularly the $S^1$-invariant property, will become evident in the proof presented below. Our argument is inspired by a direct computation in \cite[page 63]{GZbook}. It is different from the original proof of Kiselman which uses a crucial property of plurisubharmonic functions (the restriction on each complex line is subharmonic) that does not hold for $m$-subharmonic functions when $m<n$.

\begin{proof}

We begin by assuming that $(t,z) \mapsto u_t(z)$ is smooth and define the functions
\[
v(z,t) = u_{t}(z) - \varepsilon \zeta(t), \; V(z,w)=v(z,\log |w|),
\]
where $\varepsilon > 0$ is fixed, and $\zeta(t)=\log(t(1-t))$ is a  concave function.
Since $w\mapsto \log |w|$ is harmonic in the annulus $A=\{1<|w|<e\}$, it follows that $w\mapsto -\zeta(\log |w|)$ is a strictly subharmonic function in $A$, as the following computation shows
$$dd^c \zeta (\log |w|) = \zeta''(\log |w|)  d \log |w| \wedge d^c \log |w|<0.$$
Note also that $V(z,w) \to +\infty$ as $w \to \partial A$.  Due to the strict convexity of $t \mapsto v(z,t)$, we infer that for each $z \in \Omega$, there exists a unique $t(z) \in (0,1)$ such that $v(z,t(z)) = \inf_{s \in (0,1)} v_s(z)$. Furthermore, we have
\begin{equation}
    \label{eq: Kis}
    \frac{\partial v}{\partial t}(z,t(z)) = 0, \quad \text{ for all } z\in \Omega.
\end{equation}
Since, $\frac{\partial^2 v}{\partial t^2}(z,t(z)) > 0$, the implicit function theorem implies that $z \mapsto t(z)$ is smooth. The Hessian of $z \mapsto h(z) = v(z,t(z))$ is given by
 \[
 \frac{\partial h(z)}{\partial \bar z_k} = \frac{\partial v}{\partial \bar z_k} (z, t(z)) + \frac{\partial v}{\partial t}(z,t(z)) \frac{\partial t}{\partial \bar z_k}(z),
 \]
 and
\begin{flalign}
\frac{\partial^2 h(z)}{\partial z_j \partial \bar{z}_k}  &= \frac{\partial^2 v}{\partial z_j \partial \bar{z}_k} (z,t(z)) + \frac{\partial^2 v}{\partial t^2}(z,t(z)) \frac{\partial t}{\partial \bar z_k}(z) \frac{\partial t}{\partial z_j}(z) \nonumber\\
&+ \frac{\partial^2 v}{\partial t \partial \bar z_k}(z,t(z)) \frac{\partial t(z)}{\partial z_j} + \frac{\partial^2 v}{\partial t \partial z_j}(z,t(z)) \frac{\partial t(z)}{\partial \bar z_k}. \label{eq: Hes of h}
\end{flalign}
Here, we have used \eqref{eq: Kis} to obtain $\frac{\partial v}{\partial t}(z,t(z)) \frac{\partial^2 t}{\partial z_j \partial \bar z_k} =0$.
For $\xi = (\xi_1,\ldots,\xi_{n})\in \mathbb{C}^{n}$, we consider the positive $(1,1)$-form
\[
\alpha :=  i \gamma \wedge \bar \gamma, \; \text{where}\; \gamma =  \sum_{j=1}^n \xi_j dz_j - dw.
\]
In $\Omega \times A$, we compute
\[
dd^c V \wedge \alpha =  \beta_1 + \beta_2 \wedge i dw \wedge d\bar w,
\]
where $\beta_1$ is a $(2,2)$-form which does not contain the term $dw\wedge d\bar w$, and
\[
\beta_2 =  i\sum_{j,k=1}^n \left (\frac{\partial^2 V}{\partial z_j \partial \bar{z}_k}  + \frac{\partial^2 V}{\partial z_j \partial \bar w} \bar{\xi}_k+ \frac{\partial^2 V}{\partial w \partial \bar{z}_k} \xi_j+ \xi_j \bar{\xi}_k \frac{\partial^2 V}{\partial w \partial \bar w} \right) dz_j \wedge d\bar{z}_k.
\]
Let $\alpha_1,\ldots,\alpha_{m-1}$ be $m$-positive $(1,1)$-forms with constant coefficients in $\Omega$. Since the form $\alpha_1 \wedge \ldots \wedge \alpha_{m-1} \wedge \theta^{n-m}$ does not contain $dw$ nor $d\bar w$, by the above computation, we get
\[
dd^c V \wedge \alpha \wedge \pi^*\alpha_1 \wedge \ldots\wedge \pi^*\alpha_{m-1} \wedge \theta^{n-m} = \beta_2 \wedge \pi^*\alpha_1 \wedge \ldots \wedge \pi^*\alpha_{m-1} \wedge \theta^{n-m} \wedge (idw \wedge d\bar w).
\]
Since $V$ is $(m+1,\theta)$-subharmonic, we infer that the left-hand side is positive; hence so is the right-hand side. This yields $\beta_2 \wedge \alpha_1 \wedge \ldots \wedge \alpha_{m-1} \wedge \theta^{n-m} \geq 0$, and since the $m$-positive $(1,1)$-forms $\alpha_1,\ldots,\alpha_{m-1}$ was chosen arbitrarily, we infer that $\beta_2$ is $m$-positive. Recall that
\[
\frac{\partial V}{\partial w} = \frac{\partial v}{\partial t} \frac{1}{2w}, \; \frac{\partial^2 V}{\partial w \partial \bar w} = \frac{\partial^2 v}{\partial t^2} \frac{1}{4|w|^2}.
\]
Choosing $\xi_j = 2w \frac{\partial t}{\partial z_j}(z)$, for $j=1,\ldots,n$, we infer from the $m$-positivity of $\beta_2$ and \eqref{eq: Hes of h} that $dd^c h$ is $m$-positive, hence $h$ is $m$-subharmonic in $\Omega$. As $\varepsilon \to 0^+$, we conclude that $\inf_{t \in (0,1)} u_t(z)$ is $m$-subharmonic, being the decreasing limit of such functions.

To address the general case, we approximate $(z,w) \mapsto u_{\log |w|}(z)$ by convolution in $\Omega \times A$, yielding a decreasing sequence $u_j$ of smooth $(\theta, m+1)$-subharmonic functions defined in a slightly smaller domain $\Omega' \times A'$, which remain $S^1$-invariant in $w = e^t$. The function $\varphi_{j,\varepsilon} = \inf_{t \in (\varepsilon, 1-\varepsilon)} u_j(z,e^t)$ is thus $m$-subharmonic in $\Omega'$. As $j \to +\infty$, $\varphi_{j,\varepsilon} \searrow \varphi_{\varepsilon} = \inf_{t \in (\varepsilon, 1-\varepsilon)} u(z,e^t)$, confirming $\varphi_{\varepsilon}$ as $m$-subharmonic in $\Omega'$. Finally, as $\varepsilon \to 0^+$ and $\Omega' \to \Omega$, we confirm the result since $\varphi_{\varepsilon} \searrow \inf_{t \in (0,1)} u_t$.
\end{proof}

\begin{corollary}\label{cor: inf of geo}
    If $u_t$ is an $m$-subharmonic geodesic between $u_0$ to $u_1$, then
\[
\inf_{t\in (0,1)} u_t = \PP(u_0,u_1).
\]
\end{corollary}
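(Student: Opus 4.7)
The plan is to prove the two inequalities separately, using Theorem~D for one direction and the defining property of the geodesic as an upper envelope for the other.

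For the inequality $\inf_{t\in(0,1)}u_t\leq \PP(u_0,u_1)$, set $\varphi(z)=\inf_{t\in(0,1)}u_t(z)$. Since $u_t$ is by definition an $m$-subharmonic sub-geodesic, Theorem~D applies and yields that $\varphi\in\MSH{\Omega}$. Using the $S^1$-invariance in $w=e^t$, the map $t\mapsto u_t(z)$ is convex, so $u_t\leq (1-t)u_0+tu_1$; combined with the definition of geodesic connectivity (which gives $u_t\to u_0$ as $t\to 0$ and $u_t\to u_1$ as $t\to 1$ almost everywhere, possibly after the approximation scheme of Remark~\ref{rem:approximation}), this yields $\varphi\leq \min(u_0,u_1)$ almost everywhere, hence quasi-everywhere. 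By the very definition of $\PP$, we conclude $\varphi\leq \PP(u_0,u_1)$.

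For the reverse inequality $\PP(u_0,u_1)\leq u_t$ for every $t$, consider the constant curve $t\mapsto w_t:=\PP(u_0,u_1)$. As noted after the definition of sub-geodesic, any fixed $m$-subharmonic function on $\Omega$, viewed as a function on $D=\Omega\times A$ independent of $w$, is $(\theta,m+1)$-subharmonic; so $(w_t)$ is an $m$-subharmonic sub-geodesic. Since $\PP(u_0,u_1)\leq \min(u_0,u_1)$, the curve $w_t$ lies in $\mathcal{S}(u_0,u_1)$, and by the definition of the geodesic as the upper envelope of that class we get $\PP(u_0,u_1)\leq u_t$ for every $t\in(0,1)$. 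Taking the infimum over $t$ gives $\PP(u_0,u_1)\leq \varphi$, completing the proof.

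There is no substantial obstacle here: the only point that requires a small care is the passage from the pointwise-a.e.\ convergence of $u_t$ to $u_0,u_1$ at the endpoints to the inequality $\varphi\leq\min(u_0,u_1)$; this is immediate from convexity in $t$, and in the unbounded case it is precisely what is built into the definition of geodesic connectivity. No additional ingredients beyond Theorem~D and the sub-geodesic property of constant curves are needed.
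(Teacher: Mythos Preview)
Your proof is correct and follows essentially the same approach as the paper: apply Theorem~D to get that $\varphi=\inf_t u_t$ is $m$-subharmonic, use membership of $u_t$ in $\mathcal{S}(u_0,u_1)$ to bound $\varphi\leq\min(u_0,u_1)$, and use the constant curve $w_t=\PP(u_0,u_1)$ as a candidate for the reverse inequality. One small remark: the appeal to geodesic connectivity is unnecessary (and not assumed in the statement); the convexity inequality $u_t\leq(1-t)u_0+tu_1$ alone already gives $\varphi\leq\min(u_0,u_1)$ upon taking the infimum over $t$, and since $\varphi,u_0,u_1$ are subharmonic the inequality upgrades from a.e.\ to everywhere (hence quasi-everywhere).
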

\begin{proof}
Applying the Kiselman minimum principle, we establish that $v = \inf_{t \in (0,1)} u_t$ is an $m$-subharmonic function that lies below $u_t$ for any $t \in (0,1)$. Given the assumption $\limsup_{t \to a} u_t \leq u_a$ for $a = 0, 1$, almost everywhere in $\Omega$, it naturally follows that $v \leq \PP(u_0, u_1)$ (a.e., and hence everywhere). Moreover, since the curve $w_t = P(u_0, u_1)$, $t\in (0,1)$ is a candidate defining $u_t$, we infer that $P(u_0, u_1) \leq u_t$.
\end{proof}

\subsection{Geodesic connectivity}\label{sec:Geodesic}
 In our previous work \cite{ACLR24}, we prove that given $H_0, H_1 \in \mathcal{E}_m$ that are connectable by a plurisubharmonic geodesic, and $u_0 \in \Ker(H_0)$, $u_1 \in \Ker(H_1)$ (with $m=n$), then $u_0$ and $u_1$ can be connected by a plurisubharmonic geodesic segment if and only if
\begin{equation*}
    u_0 \leq g_{u_1} \quad \text{and} \quad u_1 \leq g_{u_0},
\end{equation*}
where $g_{u_i}$, $i=0,1$, is the residual function of $u_i$ as defined in Definition~\ref{def: residual function}. The goal of this section is to extend this result to the case where $m<n$.

\begin{theorem}\label{thm: geo connect Darvas}
    Assume $u_0,u_1$ are $m$-subharmonic  functions in $\Omega$. Then $u_0$ can be connected to $u_1$ by an $m$-subharmonic  geodesic if and only if
    \begin{equation}
        \label{eq: geo connectivity}
        \PP[u_0](u_1)=u_1 \quad \text{and}\quad \PP[u_1](u_0)=u_0.
    \end{equation}
\end{theorem}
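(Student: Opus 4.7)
My plan is to reduce both implications to the single pointwise identity
\[
\inf_{t \in (0,1)} (u_t + Ct) \;=\; \PP(u_0,\, u_1 + C), \quad C > 0,
\]
where $u_t = (\sup\{v_t : v \in \mathcal{S}(u_0,u_1)\})^*$ is the largest $m$-subharmonic sub-geodesic. For the inequality $\leq$, the curve $w_t := u_t + Ct$ is itself an $m$-subharmonic sub-geodesic in $\mathcal{S}(u_0, u_1+C)$, since $(z,w)\mapsto C\log|w|$ is pluriharmonic in $D$ and its addition preserves $(\theta, m+1)$-positivity. The Kiselman minimum principle (Theorem~D) then makes $\inf_t w_t$ an $m$-subharmonic function in $\Omega$; the sub-geodesic property gives $\limsup_{t\to 0^+} w_t \leq u_0$ and $\limsup_{t\to 1^-} w_t \leq u_1 + C$ almost everywhere, so $\inf_t w_t \leq \min(u_0, u_1+C)$ a.e., and the definition of $\PP$ upgrades this to $\inf_t w_t \leq \PP(u_0, u_1+C)$. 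For $\geq$, set $\varphi := \PP(u_0, u_1+C)$; the constant-in-$t$ curve $\varphi - Ct$ is an $m$-subharmonic sub-geodesic lying in $\mathcal{S}(u_0, u_1)$ (endpoint values $\varphi \leq u_0$ at $t=0$ and $\varphi - C \leq u_1$ at $t=1$), so $\varphi - Ct \leq u_t$, whence $\varphi \leq \inf_t (u_t + Ct)$.

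For the forward direction, assume $u_t$ genuinely connects $u_0$ to $u_1$. For almost every $z$, the convex function $t\mapsto u_t(z)$ on $[0,1]$ has finite endpoint values $u_0(z), u_1(z)$, and an elementary convex-conjugate computation yields
\[
\lim_{C\to\infty} \inf_{t\in (0,1)}\!\bigl(u_t(z) + Ct\bigr) \;=\; \lim_{t\to 0^+} u_t(z) \;=\; u_0(z).
\]
Combining this with the identity and the definition $\PP(u_0, u_1+C)\nearrow \PP[u_1](u_0)$, we obtain $\PP[u_1](u_0) = u_0$, and the symmetric equality $\PP[u_0](u_1) = u_1$ follows via the change of variable $t\mapsto 1-t$ (i.e.\ $w\mapsto e/w$ in $A$).

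For the backward direction, assume both asymptotic equalities. Without presuming geodesic connectivity, the largest sub-geodesic $u_t$ satisfies $\limsup_{t\to 0} u_t \leq u_0$ a.e.\ by membership in $\mathcal{S}(u_0,u_1)$. From the identity, $u_t + Ct \geq \PP(u_0, u_1+C)$ for every $t\in(0,1)$ and every $C>0$; fixing $C$ and letting $t\to 0^+$ gives $\liminf_{t\to 0} u_t \geq \PP(u_0, u_1+C)$, and then letting $C\to\infty$ with the hypothesis $\PP[u_1](u_0) = u_0$ yields $\liminf_{t\to 0} u_t \geq u_0$. Thus $\lim_{t\to 0} u_t = u_0$ a.e., and the argument at $t=1$ is analogous.

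The main technical delicacy I anticipate is the passage from the one-sided sub-geodesic bounds $\limsup_{t\to 0^+} w_t \leq u_0$ and $\limsup_{t\to 1^-} w_t \leq u_1+C$ to the global inequality $\inf_t w_t \leq \min(u_0, u_1+C)$ almost everywhere; this rests on the $S^1$-invariance of $w_t$ and the trivial comparison $\inf_t w_t \leq \limsup_{t\to a} w_t$ for $a=0,1$, after which the definition of $\PP$ absorbs any pointwise/a.e.\ discrepancy via upper semicontinuous regularization. A secondary subtlety in the forward direction is that the Legendre-type limit presumes $u_0(z), u_1(z)$ both finite, which holds off the $m$-polar locus $\{u_0 = -\infty\}\cup\{u_1 = -\infty\}$; this set is Lebesgue-negligible, so the a.e.\ conclusion is unaffected.
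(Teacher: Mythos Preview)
Your proposal is correct and follows essentially the same approach as the paper. You organize both implications around the identity $\inf_{t}(u_t+Ct)=\PP(u_0,u_1+C)$, which is exactly Corollary~\ref{cor: inf of geo} applied to the shifted geodesic $u_t+Ct$; the paper invokes that corollary directly, while you reprove it inline. The remaining ingredients---the candidate sub-geodesic $\PP(u_0,u_1+C)-Ct$ to bound $u_t$ from below, the Kiselman minimum principle for the opposite inequality, and the Legendre-type limit $\lim_{C\to\infty}\inf_t(u_t+Ct)=\liminf_{t\to 0}u_t$ on $\{\min(u_0,u_1)>-\infty\}$---match the paper's argument, with only the cosmetic difference that the paper evaluates at $t=1/C^2$ rather than letting $t\to 0$ before $C\to\infty$.
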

\begin{proof}
The proof adapts the argument presented by Darvas for the case $m = n$, made possible by the successful application of the Kiselman minimum principle (Theorem~D). For clarity, we briefly recall Darvas' approach from \cite{Dar17AJM}.

Assume that \eqref{eq: geo connectivity} is satisfied. For each $C > 0$, consider the curve $v_t = \PP(u_0, u_1 + C) - Ct$, which serves as an $m$-subharmonic sub-geodesic connecting $v_0 \leq u_0$ to $v_1 \leq u_1$. Consequently, we have $v_t \leq u_t$, and specifically for $t = 1/C^2$, it follows that
\[
\PP(u_0, u_1 + C) - 1/C \leq u_{1/C^2}.
\]
As $C \to +\infty$, this results in $\PP[u_1](u_0) \leq \liminf_{t \to 0} u_t$ almost everywhere. The convexity of $u_t$ gives $u_t \leq (1-t)u_0 + tu_1$, leading to $\limsup_{t \to 0} u_t \leq u_0$ and hence $\lim_{t \to 0} u_t = u_0$, almost everywhere. A similar argument applies for the limit at $t = 1$.

Conversely, assume $u_t$ connects $u_0$ to $u_1$. Fix a constant $C > 0$, and consider the curve $v_t = u_t + tC$, which is an $m$-subharmonic geodesic connecting $u_0$ to $u_1 + C$. By Corollary \ref{cor: inf of geo}, $\PP(u_0, u_1 + C) = \inf_{t \in (0,1)} (u_t + Ct)$. Due to the convexity of $t \mapsto u_t$ (referenced in \cite[Lemma 5.1]{Dar17AJM}), we find that
\[
\lim_{C \to +\infty} \inf_{t \in (0,1)} (u_t + Ct) = \liminf_{t \to 0} u_t,
\]
outside the $m$-polar set $\{\min(u_0, u_1) = -\infty\}$. Since $\liminf_{t \to 0} u_t = u_0$ almost everywhere, allowing $C \to +\infty$ confirms that $\PP[u_1](u_0) = u_0$ almost everywhere, hence everywhere.
The equality $\PP[u_0](u_1)=u_1$ follows by reversing the role of $u_0$ and $u_1$. This concludes the proof.
\end{proof}

Building on Theorem~\ref{thm: geo connect Darvas} and the rooftop equality (Theorem~B), we can adapt the proof of \cite{ACLR24} to the $m$-subharmonic setting.

\bigskip

\noindent {\bf Theorem~C.}\emph{
    Assume $H_0,H_1 \in \mathcal{E}_m$ are connectable by an $m$-subharmonic geodesic segment. Let $u_0\in \Ker(H_0)$, $u_1\in \Ker(H_1)$. Then $u_0$ and $u_1$ are connectable by an $m$-subharmonic geodesic if and only if
    \begin{equation}
        \label{eq: geo connect}
        g_{u_0}\geq u_1 \; \text{and} \; g_{u_1}\geq u_0.
    \end{equation}
    In particular, if $g_{H_0}=g_{H_1}$ then \eqref{eq: geo connect} is equivalent to $g_{u_0}=g_{u_1}$.
    }
\begin{proof}
From Theorem~B and Theorem~\ref{thm: geo connect Darvas}, it is established that the rooftop equality holds for $u_0$ and $u_1$. Consequently, \eqref{eq: geo connectivity} is equivalent to both $P(g_{u_0}, u_1) = u_1$ and $P(g_{u_1}, u_0) = u_0$, which in turn is equivalent to \eqref{eq: geo connect}. We confirm the first statement by applying Theorem~\ref{thm: geo connect Darvas} again.

Assuming that \eqref{eq: geo connect} holds, let us consider $v = \PP(g_{u_0}, g_{u_1})$. Theorems~\ref{thm: MA measure of asymptotic envelope} and \ref{cor: NP of rooftop} ensure that $\mu_r(v) = 0$. Utilizing Lemma~\ref{lem: ACCP09 lemma 4.1} and again Theorem~\ref{thm: MA measure of asymptotic envelope}, we deduce that $\mu_s(v) \geq \mu_s(g_{u_0})$ and $\mu_s(v) \geq \mu_s(g_{u_1})$. The application of the general comparison principle (Theorem~\ref{thm: uniqueness in N}) then establishes that $v = g_{u_0} = g_{u_1}$, thereby completing the proof.
\end{proof}

\bigskip


\begin{thebibliography}{10}

\bibitem{Abj19}
S.~Abja.
\newblock Geometry and topology of the space of plurisubharmonic functions.
\newblock {\em J. Geom. Anal.}, 29(1):510--541, 2019.

\bibitem{AD21}
S.~Abja and S.~Dinew.
\newblock Regularity of geodesics in the spaces of convex and plurisubharmonic
  functions.
\newblock {\em Trans. Am. Math. Soc.}, 374(6):3783--3800, 2021.

\bibitem{AD24}
S.~Abja and S.~Dinew.
\newblock Regularity of geodesics in the spaces of convex and plurisubharmonic
  functions {II}.
\newblock {\em arXiv:2405.09248}, 2024.

\bibitem{ACCP09}
P.~{\AA}hag, U.~Cegrell, R.~Czy{\.{z}}, and H.H. Ph\d{a}m.
\newblock Monge-{A}mp\`ere measures on pluripolar sets.
\newblock {\em J. Math. Pures Appl. (9)}, 92(6):613--627, 2009.

\bibitem{AC20}
P.~{\AA}hag and R.~Czy{\.z}.
\newblock Poincar{\'e}- and {Sobolev}- type inequalities for complex
  {{\(m\)}}-{Hessian} equations.
\newblock {\em Result. Math.}, 75(2):21, 2020.
\newblock Id/No 63.

\bibitem{AC22Q}
P.~{\AA}hag and R.~Czy\.{z}.
\newblock On a family of quasimetric spaces in generalized potential theory.
\newblock {\em J. Geom. Anal.}, 32(4):Paper No. 117, 29, 2022.

\bibitem{AC23G}
P.~{\AA}hag and R.~Czy{\.z}.
\newblock Geodesics in the space of {{\(m\)}}-subharmonic functions with
  bounded energy.
\newblock {\em Int. Math. Res. Not.}, 2023(12):10115--10155, 2023.

\bibitem{ACH18}
P.~{\AA}hag, R.~Czy{\.z}, and L.~Hed.
\newblock The geometry of {{\(m\)}}-hyperconvex domains.
\newblock {\em J. Geom. Anal.}, 28(4):3196--3222, 2018.

\bibitem{ACLR24}
P.~{\AA}hag, R.~Czy{\.z}, C.H. Lu, and A.~Rashkovskii.
\newblock Geodesic connectivity and rooftop envelopes in the cegrell classes.
\newblock {\em arXiv:2405.0438}, 2024.

\bibitem{BT76}
E.~Bedford and B.~A. Taylor.
\newblock The {Dirichlet} problem for a complex {Monge}-{Amp{\`e}re} equation.
\newblock {\em Invent. Math.}, 37:1--44, 1976.

\bibitem{BT82}
E.~Bedford and B.~A. Taylor.
\newblock A new capacity for plurisubharmonic functions.
\newblock {\em Acta Math.}, 149(1-2):1--40, 1982.

\bibitem{BB22arxiv}
R.J. Berman and B.~Berndtsson.
\newblock Moser-{T}rudinger type inequalities for complex {M}onge-{A}mp\`ere
  operators and {A}ubin's ``hypoth\`ese fondamentale''.
\newblock {\em arXiv:1109.1263}, 2011.

\bibitem{BB22}
R.J. Berman and B.~Berndtsson.
\newblock Moser-{T}rudinger type inequalities for complex {M}onge-{A}mp\`ere
  operators and {A}ubin's ``hypoth\`ese fondamentale''.
\newblock {\em Ann. Fac. Sci. Toulouse Math. (6)}, 31(3):595--645, 2022.

\bibitem{Blo05}
Z.~B{\l}ocki.
\newblock Weak solutions to the complex {H}essian equation.
\newblock {\em Ann. Inst. Fourier (Grenoble)}, 55(5):1735--1756, 2005.

\bibitem{Brelot1939}
M.~Brelot.
\newblock Familles de {Perron} et probl{\`e}me de {Dirichlet}.
\newblock {\em Acta Litt. Sci. Szeged}, 9:133--153, 1939.

\bibitem{Bremermann1959}
H.~J. Bremermann.
\newblock On a generalized {Dirichlet} problem for pluri-subharmonic functions
  and pseudo-convex domains. {Characterization} of {\v{s}}ilov boundaries.
\newblock {\em Trans. Am. Math. Soc.}, 91:246--276, 1959.

\bibitem{CNS85}
L.~Caffarelli, L.~Nirenberg, and J.~Spruck.
\newblock The {D}irichlet problem for nonlinear second-order elliptic
  equations. {III}. {F}unctions of the eigenvalues of the {H}essian.
\newblock {\em Acta Math.}, 155(3-4):261--301, 1985.

\bibitem{CC37}
C.~Carath{\'e}odory.
\newblock On {Dirichlet}'s problem.
\newblock {\em Am. J. Math.}, 59:709--731, 1937.

\bibitem{Ceg98}
U.~Cegrell.
\newblock Pluricomplex energy.
\newblock {\em Acta Math.}, 180(2):187--217, 1998.

\bibitem{Ceg04}
U.~Cegrell.
\newblock The general definition of the complex {M}onge-{A}mp\`ere operator.
\newblock {\em Ann. Inst. Fourier (Grenoble)}, 54(1):159--179, 2004.

\bibitem{Ceg08}
U.~Cegrell.
\newblock A general {D}irichlet problem for the complex {M}onge-{A}mp\`ere
  operator.
\newblock {\em Ann. Polon. Math.}, 94(2):131--147, 2008.

\bibitem{Dar15}
T.~Darvas.
\newblock The {M}abuchi geometry of finite energy classes.
\newblock {\em Adv. Math.}, 285:182--219, 2015.

\bibitem{Dar17AJM}
T.~Darvas.
\newblock The {M}abuchi completion of the space of {K}\"{a}hler potentials.
\newblock {\em Amer. J. Math.}, 139(5):1275--1313, 2017.

\bibitem{DDL5}
T.~Darvas, E.~Di~Nezza, and C.H. Lu.
\newblock The metric geometry of singularity types.
\newblock {\em J. Reine Angew. Math.}, 771:137--170, 2021.

\bibitem{DR16}
T.~Darvas and Y.A. Rubinstein.
\newblock Kiselman's principle, the {D}irichlet problem for the
  {M}onge-{A}mp\`ere equation, and rooftop obstacle problems.
\newblock {\em J. Math. Soc. Japan}, 68(2):773--796, 2016.

\bibitem{DK14}
S.~Dinew and S.~Ko{\l}odziej.
\newblock A priori estimates for complex {Hessian} equations.
\newblock {\em Anal. PDE}, 7(1):227--244, 2014.

\bibitem{Don99}
S.~K. Donaldson.
\newblock Symmetric spaces, {K}\"{a}hler geometry and {H}amiltonian dynamics.
\newblock In {\em Northern {C}alifornia {S}ymplectic {G}eometry {S}eminar},
  volume 196 of {\em Amer. Math. Soc. Transl. Ser. 2}, pages 13--33. Amer.
  Math. Soc., Providence, RI, 1999.

\bibitem{EG21}
A~El~Gasmi.
\newblock The {D}irichlet problem for the complex {H}essian operator in the
  class {$\mathcal{N}_m(\Omega, f)$}.
\newblock {\em Math. Scand.}, 127(2):287--316, 2021.

\bibitem{Gar59}
L.~G{\aa}rding.
\newblock An inequality for hyperbolic polynomials.
\newblock {\em J. Math. Mech.}, 8:957--965, 1959.

\bibitem{GL21a}
V.~Guedj and C.~H. Lu.
\newblock {Quasi-plurisubharmonic envelopes 1: Uniform estimates on K\"ahler
  manifolds}.
\newblock {\em arxiv:2106.04273, To appear in J. Eur. Math. Soc., DOI
  10.4171/JEMS/1460}, 2024.

\bibitem{GL22}
V.~Guedj and C.H. Lu.
\newblock Quasi-plurisubharmonic envelopes 2: {B}ounds on {M}onge-{A}mp\`ere
  volumes.
\newblock {\em Algebr. Geom.}, 9(6):688--713, 2022.

\bibitem{GL23Crelle}
V.~Guedj and C.H. Lu.
\newblock Quasi-plurisubharmonic envelopes 3: {S}olving {M}onge-{A}mp\`ere
  equations on hermitian manifolds.
\newblock {\em J. Reine Angew. Math.}, 800:259--298, 2023.

\bibitem{GZbook}
V.~Guedj and A.~Zeriahi.
\newblock {\em Degenerate complex {Monge}-{Amp{\`e}re} equations}, volume~26 of
  {\em EMS Tracts Math.}
\newblock Z{\"u}rich: European Mathematical Society (EMS), 2017.

\bibitem{HVP17}
V.V. Hung and N.V. Phu.
\newblock Hessian measures on {{\(m\)}}-polar sets and applications to the
  complex {Hessian} equations.
\newblock {\em Complex Var. Elliptic Equ.}, 62(8):1135--1164, 2017.

\bibitem{Kis78}
C.O. Kiselman.
\newblock The partial {Legendre} transformation for plurisubharmonic functions.
\newblock {\em Invent. Math.}, 49:137--148, 1978.

\bibitem{Li04}
S.-Y. Li.
\newblock On the {D}irichlet problems for symmetric function equations of the
  eigenvalues of the complex {H}essian.
\newblock {\em Asian J. Math.}, 8(1):87--106, 2004.

\bibitem{Lu12}
C.H. Lu.
\newblock {\em \'{E}quations hessiennes complexes}.
\newblock 2012.
\newblock Thesis (Ph.D.)--Universit\'{e} Toulouse III Paul Sabatier, France.

\bibitem{Lu15}
C.H. Lu.
\newblock A variational approach to complex {H}essian equations in
  {$\mathbb{C}^n$}.
\newblock {\em J. Math. Anal. Appl.}, 431(1):228--259, 2015.

\bibitem{LN22}
C.H. Lu and V.D. Nguyen.
\newblock Complex {H}essian equations with prescribed singularity on compact
  {K}\"{a}hler manifolds.
\newblock {\em Ann. Sc. Norm. Super. Pisa Cl. Sci. (5)}, 23(1):425--462, 2022.

\bibitem{Mab87}
T.~Mabuchi.
\newblock Some symplectic geometry on compact {K}\"{a}hler manifolds. {I}.
\newblock {\em Osaka J. Math.}, 24(2):227--252, 1987.

\bibitem{Ngu13}
N.C. Nguyen.
\newblock Subsolution theorem for the complex {H}essian equation.
\newblock {\em Univ. Iagel. Acta Math.}, (50):69--88, 2013.

\bibitem{NVT19}
V.T. Nguyen.
\newblock Maximal {$m$}-subharmonic functions and the {C}egrell class
  {$\mathcal{N}_m$}.
\newblock {\em Indag. Math. (N.S.)}, 30(4):717--739, 2019.

\bibitem{Perron1923}
O.~Perron.
\newblock Eine neue {Behandlung} der ersten {Randwertaufgabe} f{\"u}r
  {{\(\Delta u = 0\)}}.
\newblock {\em Math. Z.}, 18:42--54, 1923.

\bibitem{Ras17}
A.~Rashkovskii.
\newblock Local geodesics for plurisubharmonic functions.
\newblock {\em Math. Z.}, 287(1-2):73--83, 2017.

\bibitem{Ras22}
A.~Rashkovskii.
\newblock Rooftop envelopes and residual plurisubharmonic functions.
\newblock {\em Ann. Polon. Math.}, 128(2):159--191, 2022.

\bibitem{Ras23}
A.~Rashkovskii.
\newblock Plurisubharmonic interpolation and plurisubharmonic geodesics.
\newblock {\em Axioms}, 12(7), 2023.

\bibitem{AS12}
A.~S. Sadullaev and B.~I. Abdullaev.
\newblock Potential theory in the class of {{\(m\)}}-subharmonic functions.
\newblock {\em Proc. Steklov Inst. Math.}, 279:155--180, 2012.

\bibitem{AS13}
A.~S. Sadullaev and B.~I. Abdullaev.
\newblock Capacities and {Hessians} in the class of {{\(m\)}}-subharmonic
  functions.
\newblock {\em Dokl. Math.}, 87(1):88--90, 2013.

\bibitem{Sal23}
M.~Salouf.
\newblock Degenerate complex {M}onge-{A}mp\`ere equations with non-{K}\"ahler
  forms in bounded domains.
\newblock {\em arXiv:2303.04897, to appear in Indiana University Mathematics
  Journal}, 2023.

\bibitem{Sem92}
S.~{Semmes}.
\newblock {Complex Monge-Amp\`ere and symplectic manifolds}.
\newblock {\em {Am. J. Math.}}, 114(3):495--550, 1992.

\bibitem{Vin86}
A.~Vinacua.
\newblock {\em Nonlinear elliptic equations written in terms of functions of
  the eigenvalues of the complex Hessian}.
\newblock 1986.
\newblock Thesis (Ph.D.)--New York University.

\bibitem{Vin88}
A.~Vinacua.
\newblock Nonlinear elliptic equations and the complex {H}essian.
\newblock {\em Comm. Partial Differential Equations}, 13(12):1467--1497, 1988.

\bibitem{Wiener24a}
N.~Wiener.
\newblock Certain notions in potential theory.
\newblock {\em J. Math. Phys., Mass. Inst. Techn.}, 3:24--51, 1924.

\bibitem{Wiener24b}
N.~Wiener.
\newblock The {Dirichlet} problem.
\newblock {\em J. Math. Phys., Mass. Inst. Techn.}, 3:129--146, 1924.

\bibitem{Wiener25}
N.~Wiener.
\newblock Note on a paper of {O}. {Perron}.
\newblock {\em J. Math. Phys., Mass. Inst. Techn.}, 4:21--32, 1925.

\end{thebibliography}
\end{document}